\newtheorem{theorem}{Theorem}[section]
\newtheorem{definition}{Definition}[section]
\newtheorem{remark}{Remark}[section]
\newtheorem{lemma}{Lemma}[section]
\newtheorem{example}[theorem]{Example}
\DeclareMathOperator*{\argmin}{argmin}
\DeclareMathOperator{\spn}{span}
\DeclareMathOperator{\rank}{rank}
\DeclareMathOperator{\diag}{diag}
\newcommand{\ignore}[1]{}
\def\x{{\bf x}}
\def\X{{\bf X}}
\def\y{{\bf y}}
\def\w{{\bf w}}
\def\u{{\bf u}}
\def\O{{\cal O}}
\def\M{{\cal M}}
\def\P{{\cal P}}
\newcommand{\R}{\mathbb{R}}
\newcommand{\Z}{\boldsymbol{Z}}
\newcommand{\Q}{\boldsymbol{Q}}
\newcommand{\A}{\boldsymbol{A}}
\newcommand{\q}{\boldsymbol{q}}
\title{Convergence and Near-optimal Sampling for Multivariate Function Approximations in Irregular Domains via Vandermonde with Arnoldi
}
\author{Wenqi Zhu\thanks{Mathematical Institute, University of Oxford, Oxford, UK, OX2 6GG. 
{\tt wenqi.zhu@maths.ox.ac.uk}; \\$\qquad$ Wenqi Zhu is the corresponding author. This work was supported by the Hong Kong Innovation and Technology Commission (InnoHK Project CIMDA).} \quad and \quad Yuji Nakatsukasa\thanks{Mathematical Institute,  University of Oxford, Oxford, UK, OX2 6GG.  {\tt nakatsukasa@maths.ox.ac.uk} } 
}
\date{\today}
\begin{document}
\maketitle

\abstract{Vandermonde matrices are usually exponentially ill-conditioned and often result in unstable approximations. In this paper, we introduce and analyze the \textit{multivariate Vandermonde with Arnoldi (V+A) method}, which is based on least-squares approximation together with a Stieltjes orthogonalization process, for approximating continuous, multivariate functions on $d$-dimensional irregular domains. The V+A method addresses the ill-conditioning of the Vandermonde approximation by creating a set of discrete orthogonal bases with respect to a discrete measure. The V+A method is simple and general, relying only on the domain's sample points. This paper analyzes the sample complexity of {the least-squares approximation that uses the V+A method}. We show that, for a large class of domains, this approximation gives a well-conditioned and near-optimal $N$-dimensional least-squares approximation using $M=\O(N^2)$ equispaced sample points or $M=\O(N^2\log N)$ random sample points, independently of $d$. We provide a comprehensive analysis of the error estimates and the rate of convergence of the least-squares approximation that uses the V+A method. Based on the multivariate V+A techniques, we propose a new variant of the weighted V+A least-squares algorithm that uses only $M=\O(N\log N)$ sample points to achieve a near-optimal approximation. {Our initial numerical results validate that the V+A least-squares approximation method provides well-conditioned and near-optimal approximations for multivariate functions on (irregular) domains. Additionally, the (weighted) least-squares approximation that uses the V+A method performs competitively with state-of-the-art orthogonalization techniques and can serve as a practical tool for selecting near-optimal distributions of sample points in irregular domains.}}

\noindent
\textit{Keywords: least-squares, Vandermonde matrix, Arnoldi, polyval, polyfit, ill-conditioning, sample complexity, near-optimal sampling
}

\section{Introduction and Overview of the Paper}

Many problems in computational science call for the approximation of smooth, multivariate functions. In this paper, we consider the problem of approximating a multivariate continuous function $f: \Omega \rightarrow \R$ of $d \ge 1$ variables, where the domain $\Omega \subset \R^d$ may be irregular. Using Vandermonde matrices to fit polynomials is one of the most straightforward approaches. However, the Vandermonde matrix is usually exponentially ill-conditioned, even on standard domains such as an interval, unless the sample points are very carefully chosen~\cite{vander3, vander2, vander1}. Recently, in \cite{brubeck2021vandermonde}, the authors developed an orthogonalization framework that couples Vandermonde matrices with Arnoldi orthogonalization for univariate function approximations, known as the univariate Vandermonde with Arnoldi method (V+A).

In this paper, we extend the univariate V+A method to a multivariate version that can be used for $d$-dimensional function approximations ($d \ge 2$). 
The multivariate V+A method is based on least-squares approximation coupled with a Stieltjes orthogonalization process, aimed at approximating continuous, multivariate functions on irregular $d$-dimensional domains. This method establishes a set of discrete multivariate orthogonal bases with respect to a discrete measure.

There is extensive literature on $d$-dimensional polynomial approximation algorithms that assume the function $f$ is defined over a hypercube domain containing the irregular domain $\Omega$~\cite{frame2, Adcock2019, cohen}. These algorithms, commonly known as polynomial frame approximations, create an orthogonal basis in the hypercube domain. On the other hand, the V+A algorithm creates a discrete orthogonal basis directly in $\Omega$ and effectively constructs a well-conditioned basis for the irregular domain $\Omega$. 
It is well-known that, even if we have a well-conditioned basis, least-squares approximations can still become inaccurate when the number of sample points $M$ (from a suboptimal distribution, e.g., equispaced points) is insufficient, such as when $M$ is close to the dimension of the approximation space $N$. Poorly distributed sample points can also affect the quality of the solution. In some domains, polynomial frame approximations have provable bounds on the sample complexity; namely, the scaling between the dimension of the approximation space $N$ and the number of samples $M$, which is sufficient to guarantee a well-conditioned and accurate approximation~\cite{ADCOCK, Adcock2019, cohen}. However, to the best of our knowledge, there appears to be no literature on the sample complexity of the V+A procedure.

A key theoretical contribution of this paper is to investigate how $M$ behaves as a function of $N$ such that the least-squares approximant converges to $f$ as the total degree of freedom in approximation $N \rightarrow \infty$. We show that, in a large number of domains (i.e., real intervals, convex domains, or finite unions of convex domains), the least-squares approximation using the V+A method gives a well-conditioned and accurate $N$-dimensional approximation using $M=\O(N^2)$ equispaced sample points or $M=\O(N^2\log N)$ random sample points. The sample complexity of the least-squares approximation using the V+A method is 
comparable to that of 
polynomial frame approximation~\cite{ADCOCK, frame2, cohen}. However, since V+A only requires enough sample points such that the discrete orthogonal polynomials are nearly orthogonal in the whole/continuous domain of interest, it can, in some cases, provide an approximation of similar accuracy using fewer sample points. In addition, since the least-squares problem to be solved becomes well-conditioned, it requires no sophisticated analysis or algorithm to compute an accurate solution (in frame approximation, a remarkable result is that despite the ill-conditioning, one can find a near-optimal solution by finding a small-norm solution using the truncated SVD~\cite{ADCOCK, frame2}).
Using results on sample complexity, we further prove that, under suitable sample distributions and domains, the least-squares approximation that uses the multivariate V+A method is near-optimal. {Specifically, the $N$th polynomial approximation obtained (referred to as the V+A approximant) converges to $f$ at a spectral rate with respect to $N$ for the total degree polynomial space, contingent upon the smoothness of $f$ in $\Omega.$}

In addition to theory, {we provide numerical studies for the multivariate V+A method. Our preliminary numerical results {indicate} that the multivariate V+A method effectively tackles the ill-conditioning associated with the Vandermonde approximation. To our knowledge, the multivariate V+A algorithm introduced in \cite{Hokanson} is implemented only for rational approximations on standard 1D or 2D domains. We extend the numerical applications of the multivariate V+A algorithm to include polynomial approximation on 2D (irregular domains) and higher-dimensional domains ($d=3,5$). Our initial numerical results indicate that as $N$ increases, the V+A least-squares approximation provides a significantly more accurate approximation compared to the Vandermonde least-squares method for $2 \le d\le 5$.} In several papers{~\cite{ADCOCK, adcock2022towards, cohenweight, migdisorth}}, the authors proved that an effective weighting can lead to the near-optimal scaling of $M = \O(N \log(N))$. However, in past approaches~\cite{ADCOCK}, the QR factorization was used to orthogonalize the least-squares system. 
A key numerical contribution of this paper is that we propose a variant of the weighted least-squares algorithm that uses the multivariate V+A as the orthogonalization method (\texttt{VA+Weight}). This algorithm is stable with high probability and only takes $M=\O(N\log N)$ sample points to give a near-optimal approximation. Due to the reduced sample density, \texttt{VA+Weight} also gives a lower online computational cost than the unweighted least-squares using the V+A method. Our initial numerical results confirm that \texttt{VA+Weight} performs competitively with state-of-the-art orthogonalization techniques for the Vandermonde matrix. Finding the optimal distribution of sample points in a high-dimensional irregular domain is an open question in the literature~\cite{Markov}. Our preliminary numerical results highlight that \texttt{VA+Weight} is a practical tool for selecting the near-optimal distribution of sample points {for the irregular domains considered in this paper.} 

The paper is arranged as follows. In Section \ref{Section1.1}, we introduce the Arnoldi orthogonalization procedure in the least squares approximation setup, presenting the univariate V+A algorithm along with three examples of its numerical applications. In Section \ref{section Multivariate VA}, we extend the univariate V+A algorithm to higher dimensions. We compare the multivariate V+A method with other techniques and showcase its effectiveness through five examples of numerical applications in function approximation within $d$-dimensional (irregular) domains ($d \ge 2$).
Section \ref{Section leb constant} gives our main theoretical result on sample complexity and convergence rate of least squares approximation using the V+A algorithm on general $d$-dimensional domains. In Section \ref{section Weighted LSA}, we give the weighted V+A least-squares algorithm, \texttt{VA+Weight}, which takes only $M=\O(N\log N)$ sample points to give a near-optimal approximation.

\subsection{Related Work}

The idea of orthogonalizing monomials and combining Vandermonde with Arnoldi is not completely new. Past emphasis has been on constructing continuous orthogonal polynomials on a continuous domain.  The purpose of the orthogonalization is to obtain the orthogonal polynomial itself \cite{8,9}.  However, V+A, also known as Stieltjes orthogonalization \cite{Stieltjes2, Stieltjes1}, is a versatile method that can dramatically improve the stability of polynomial approximation~\cite{brubeck2021vandermonde}. The purpose of the orthogonalization is to improve the numerical stability of the least-squares system. 

The multivariate version of V+A was first discussed by Hokanson~\cite{Hokanson}{, who} applies V+A to the Sanathanan-Koerner iteration in rational approximation problems. See also~\cite{austin2021practical}. In these studies, the multivariate V+A is used for rational approximations on standard domains. Applying the V+A algorithm to approximate multivariate functions on irregular domains appears not to have been considered in the literature. 

It is worth noting that V+A also has numerous applications beyond polynomial approximation. For instance, it can be used in the `lightning' solver~\cite{Solving}, a state-of-the-art PDE solver that solves Laplace's equation on an irregular domain using rational functions with fixed poles. We also combine V+A with Lawson's algorithm~\cite{Lawson} to improve the accuracy of the approximation. We refer to this algorithm as \texttt{VA+Lawson}. \texttt{VA+Lawson} attempts to find the best polynomial approximation\footnote{The best approximation is defined in Remark \ref{notation} (3).} in difficult domains (for instance, disjoint complex domains). \texttt{VA+Lawson} is also a powerful tool for finding the minimal polynomial for the GMRES algorithms and power iterations in irregular complex domains.

\section{Univariate Vandermonde with Arnoldi}
\label{Section1.1}

Polynomial approximation of an unknown function by fitting a polynomial to a set of sample points from the domain is a classic problem. The interpolation and least-squares methods are two methods to solve this type of problem~\cite{migdisorth, ATAP}. We start with the simplest 1D polynomial approximation problem in this section. 

Let $\Omega \subset \R$ be a bounded domain, $\boldsymbol{X}: = \{\x_i\}_{1 \le i \le M} \subset \Omega$ a set of $M$ distinct sample points and $f:  \Omega \rightarrow \R$ a continuous function which gives a value at each sample point. Let $\P_{1,n}$ represent the space of univariate ($d=1$) polynomials with the highest {order} $n$, and $N:= n+1$ denotes the total degrees of freedom. 
{We aim to find a $N$th degree polynomial approximation, $ \mathcal{L}(f)$, such that
\begin{equation}
    \mathcal{L}(f) = \argmin_{p\in \P_{1,n}} \sum_{i=1}^M  |f(\x_i)-p(\x_i)|^2.
    \label{vander least}
\end{equation}}
We can write $\mathcal{L}(f)(\x) = \sum_{j=1}^N c_j\x^{j-1}$ where $\{c_i\}_{1 \le i \le N}$ are the monomial coefficients to be determined. The equation \eqref{vander least} can be formulated as a Vandermonde least-squares problem
  \begin{equation}
    \label{eq:vander_LS}
\boldsymbol{c} = \argmin_{\boldsymbol{c}\in \R^N} \|\boldsymbol{Ac} - \boldsymbol{\tilde{f}}\|_{2}.
  \end{equation}
Using the pseudoinverse, we can write the solution as $\boldsymbol{c}  =\A^{\dagger}\boldsymbol{\tilde{f}} = (\A^*\A )^{-1} \A^*{\tilde{f}}$ 
where $\A $ is an $M \times N$ Vandermonde matrix with the $(i, j)$th entry $x_i^{j-1}$ for $1\le i \le M$, $1\le j \le N$, $\boldsymbol{c}:=[c_1, \dotsc, c_N]^T$, and $\boldsymbol{\tilde{f}}:=[f({\x}_1), \dotsc, f({\x}_M)]^T$. Since the sample points are distinct, $\A $ is full rank and thus the solution $\boldsymbol{c}$ exists and is unique. If $N=M$, we have an interpolation problem, and $\A $ is a square matrix. The solution is given by $\boldsymbol{c} = \A^{-1}\boldsymbol{\tilde{f}}$. If $N<M$, $\A $ is a tall rectangular matrix and we have a least-squares problem with normal equation $\boldsymbol{A^*Ac} =\boldsymbol{A^*\tilde{f}}$. 

{The least-squares problem~\eqref{eq:vander_LS} has the same solution as the normal equation 
$\boldsymbol{A^*Ac} =\boldsymbol{A^*\tilde{f}}.$ If  $A$ is well-conditioned, the least squares problem can be solved by the normal equation. However,  if  $A$ is moderately (or highly) ill-conditioned, since the condition number of $\mathbf{A^*A}$ is the square of the condition number of $\mathbf{A}$, the conditioning of  the normal equation 
$\boldsymbol{A^*Ac} =\boldsymbol{A^*\tilde{f}}$ significantly worsens.}
In this paper, unless otherwise stated, we focus on the least-squares problem~\eqref{eq:vander_LS} and assume that $N<M$.

Ideally, we can find the coefficients of the polynomial approximation by solving {the least-squares problem \eqref{eq:vander_LS}}. However, the Vandermonde matrices are well known to be exponentially ill-conditioned~\cite{vander2} (unless the nodes are uniformly distributed on the unit circle). 
This creates instability in solving the Vandermonde system and makes a moderately high degree Vandermonde approximation inaccurate. The ill-conditioning of the Vandermonde matrix is due to the non-orthogonal nature of the monomial basis. A potential solution to the ill-conditioning  is to treat the monomial basis as a Krylov subspace sequence, such that 
\begin{equation}
    \spn \{1, \boldsymbol{z}, \boldsymbol{z}^2, \dotsc, \boldsymbol{z}^n \}=\spn\{\q_1, \Z \q_1, \Z ^2\q_1, \dotsc, \Z ^n\q_1 \} = \mathcal{K}_{N}{(\Z , \q_1)}, \label{kylov}
\end{equation}
where $\boldsymbol{z} = [\x_1, \dotsc, \x_M]^T \in \R^{M}$, $\Z  = \diag(\x_1, \dotsc, \x_M) \in \R^{M \times M}$, {$\Z ^j = \diag(\x_1^j, \dotsc, \x_M^j) $} and $\q_1= [1, \dotsc, 1]^T \in \R^{M}$. 
Based on this observation, in V+A we apply Arnoldi orthogonalization to the Krylov space $\mathcal{K}_{N}{(\Z , \q_1)}$. The Krylov space $\mathcal{K}_{N}{(\Z , \q_1)}$ is orthogonalized by the decomposition $\Z \Q_- = \Q \boldsymbol{H}$ where $\Q$ is the  matrix with columns $[\q_1, \dotsc,\q_N]$ and $\Q_-$ is the same matrix
without the final column. By the Arnoldi process, we transform the ill-conditioned Vandermonde system into an optimally conditioned system\footnote{The condition number of $\Q $ is $1$.}, 
\begin{eqnarray}
    \Q^*\boldsymbol{Qd} = \boldsymbol{{Q^*\tilde{f}}}, \quad \text{where}   \quad \Q = \begin{bmatrix} \phi_1({\x}_1) &  \dots   & \phi_N({\x}_1) \\
    \vdots &  \ddots &  \vdots   \\ \phi_1({\x}_M) &  \dots   & \phi_N({\x}_M) \end{bmatrix}  {\quad \text{is orthonormal}\quad \Q^*\Q = M\boldsymbol{I}_N}
    \label{QD=F}
\end{eqnarray}
and $\boldsymbol{\tilde{f}}$  is defined as before. \begin{equation}
\tag{Discrete Orthogonal Polynomials}
    \boldsymbol{\phi}: =\{\phi_1, \phi_2, \dotsc, \phi_N\}
    \label{def discrete orth} 
\end{equation}
is known as a set of \textit{discrete orthogonal polynomials} or a discrete orthogonal basis. $\boldsymbol{d}: = [d_1, \dots, d_N]^{T}$ denotes the coefficient vector related to the discrete orthogonal polynomials, such that  $\mathcal{L}(f)(\x) = \sum_{j=1}^N d_j\phi_j(\x)$. By construction, the discrete orthogonal basis $\boldsymbol{\phi}$ spans the polynomial space $\P_{1,n}$. We say that $\phi_j, \phi_k \in \P_{1,n}$ satisfies \textit{discrete orthogonality} w.r.t. $\boldsymbol{X} = \{\x_i\}_{1 \le i \le M}$, if
\begin{equation}
\tag{Discrete Orthogonality}
\frac{1}{M}\sum_{i=1}^M \phi_j(\x_i) \phi_k(\x_i) =\delta_{j,k}, \qquad 1\le j, k \le N
\label{discrete orth}
\end{equation}
where $\delta_{j,k}$ denotes the Kronecker delta.  On real intervals, the discrete orthogonal polynomials are known to satisfy many algebraic properties analogous to those of the continuous orthogonal polynomials \cite{8,xu2004discrete}. 

\subsection{Algorithm for Vandermonde with Arnoldi}

The Arnoldi algorithm, originally applied {to} finding eigenvalues, uses the {modified} Gram-Schmidt process to produce a sequence of orthogonal vectors. The orthogonal columns  $\{\q_1,\q_2,$ $ \dotsc, \q_N\}$ are obtained by the following recurrence formula
\begin{align}
\boldsymbol{H}_{1,1}:=\frac{\q_1^{*} \Z \q_1}{M}  = \frac{1}{M} \sum_{i=1}^M \x_i, \qquad \boldsymbol{H}_{k+1, k} \q_{k+1}:= \Z  \q_{k} - \sum_{j=1}^k \boldsymbol{H}_{j,k}\q_{j} \quad 
\label{recursion}
\end{align}
where $\q_1= [1, \dotsc, 1]^T \in \R^{M}$ and $\Z  = \diag(\x_1, \dotsc, \x_M) \in \R^{M \times M} $. 
Note that $\q_j^T\q_k =0$ for all $j, k =1, \dotsc, N$ with $j \neq k$ and $\|\q_{k}\|_2 =\sqrt{M}$ for $k =1, \dotsc, N$. The scaling of $\sqrt{M}$ ensures that the Euclidean norm of the V+A solution, $\boldsymbol{d}$, is relatively constant as the number of sample points $M$ increases. 
$\boldsymbol{H}_{j, k}$ are the coefficients of the recurrence formula and also denote the $(j, k)$th entries of the matrix $\boldsymbol{H}$.
 By orthogonality, $\boldsymbol{H}$ is a $N \times (N-1)$ lower Hessenberg matrix. For real sample points, $\{\x_i\}_{1 \le i \le M}$,  $\boldsymbol{H}$ is tridiagonal and the Arnoldi algorithm is equivalent to the Lanczos algorithm.

In the Vandermonde least-squares system \eqref{eq:vander_LS}, we construct {the full matrix $\mathbf{A}$ and solve the poorly conditioned linear system. In V+A, however, we form the matrix $\mathbf{Q}$ column by column, orthogonalizing each new column against all previous columns using the Arnoldi algorithm.} Thus, the Arnoldi process gives us an optimally-conditioned least-squares problem (i.e., {the condition number of $\Q$, $\kappa_2(\Q)=1$}) $\min_{\boldsymbol{d}\in \R^N}\|\boldsymbol{Qd} - \boldsymbol{\tilde{f}}\|_{2}$. The solution for the least-squares system exists, 
\begin{equation}
\boldsymbol{d}= \frac{1}{M} \Q^* \boldsymbol{\tilde{f}}
\label{sol d}
\end{equation}
where the $\frac{1}{M}$ factor comes from the column scaling of $\Q $ such that $\boldsymbol{Q^*Q}=M{\boldsymbol{I}_N}$. Note that the solution of the Vandermonde system, $\boldsymbol{c}$, and the solution of the V+A system, $\boldsymbol{d}$, are related by $\boldsymbol{d} = \frac{1}{M} \Q^{*}\boldsymbol{Ac}$. In MATLAB, $\boldsymbol{d}$ can be obtained either by \eqref{sol d} or by the \texttt{backslash} command.  The \texttt{backslash} command invokes the QR factorization for the least-squares problem and the Gaussian elimination for the interpolation problem (when $M=N$). For improved accuracy, we choose to obtain $\boldsymbol{d}$ using the \texttt{backslash} command for the extra round of orthogonalization~\cite{brubeck2021vandermonde}. 

Once the vector of coefficients $\boldsymbol{d}$ is obtained, the least-squares approximant {$p$} can be evaluated at a different set of points $\boldsymbol{Y}=\{\y_i\}_{1\le i \le K}$. The entries of the $(k+1)$th column of $\boldsymbol{H}$ are the coefficients used in the recursion formula of the discrete orthogonal polynomial, such that 
\begin{equation}
    \boldsymbol{H}_{k+1, k} \phi_{k+1}(\x)= \x \phi_{k}(\x) - \sum_{j=1}^k \boldsymbol{H}_{j,k}\phi_{j}(\x), \qquad  1 \le k \le N-1.
    \label{recursion formula }
\end{equation}
In the polynomial evaluation process, we use the same recursion formula as in \eqref{recursion formula } but apply it on a different set of points, $\boldsymbol{S} =\diag(\y_1, \dotsc, \y_M)$, such that
\begin{equation}
 \boldsymbol{U}_{k+1}:= \frac{1}{\boldsymbol{H}_{k+1, k}} \left( \boldsymbol{S}\boldsymbol{U}_{k} - \sum_{j=1}^k \boldsymbol{H}_{j,k}\boldsymbol{U}_{j} \right),  \quad  1 \le k \le N-1,
\end{equation}
with $\boldsymbol{U}_{1}:=[1, \dotsc, 1]^{T} \in \R^K$ and $\boldsymbol{H}$ is given a priori by the Arnoldi process. The polynomials are evaluated at $\boldsymbol{Y}$ by $
    \boldsymbol{p}:=\boldsymbol{Ud}
$,
where $\boldsymbol{U}:= [\boldsymbol{U} _1, \dotsc, \boldsymbol{U} _N]\in \R^{K\times N}$ and $\boldsymbol{d} \in \R^{N}$ is obtained a priori by the Arnoldi process. The $i$th entry of $\boldsymbol{p}$ represents $\mathcal{L}f(\y_i) = \sum_{j=1}^N d_j \phi_j(\y_i)$ for $1\le i \le K$. Note that the columns of $\boldsymbol{U}$ are in general approximately orthogonal, but not orthogonal. 
To test the validity of the least-squares approximant, we usually evaluate the polynomial approximant on a much finer mesh with $K\gg M$ and compare the evaluated values with $f$. We assess the error of the approximant using this method by computing the maximum absolute value at the evaluation points, $\max_{\mathbf{y}_i \in \mathbf{Y}} |f(\mathbf{y}_i) - \mathcal{L}f(\mathbf{y}_i)|$. 

\textbf{Algorithm and Costs:} The univariate V+A fitting and evaluation is  implemented in \cite{brubeck2021vandermonde} using less than 15 lines of MATLAB code. We made slight variations to the code to improve their efficiency. We provide the algorithm in  Algorithms \ref{V+A Algo} and  \ref{eval Algo}. Instead of using modified Gram-Schmidt (MGS), we use the classical Gram-Schmidt (CGS) {repeated twice, as it gives excellent orthogonality and speed.
Doing CGS twice creates a discrete orthogonal basis that satisfies
$\|\Q^*\Q -\boldsymbol{I}\|_F=\O(MN^{3/2}\boldsymbol{u}) $, where $\boldsymbol{u}$ is the unit roundoff. This is better than MGS, and a good enough bound for polynomial approximation problems which usually have dimensions of $N \lesssim 10^3$ and $M \lesssim 10^6$. More details on the univariate V+A algorithm can be found in \cite{brubeck2021vandermonde}. 

{
\begin{remark} {The notations in the algorithms in this paper represent the following.}
    \begin{itemize}
        \item $[\A]_{:,l}$ refers to all the elements in the $l$th column of the matrix $\A$.
 \item $[\A]_{1:l,l}$ refers to all the elements from the first to the $l$th element in the $l$th row of the matrix $\A$.
 \item $[\A]_{:,1:l}$ refers to all the elements from the first to the $l$th column of the matrix $\A$.
    \end{itemize}
\end{remark} 

\begin{algorithm}[ht]\small
{\textbf{Input}: Sample points $\boldsymbol{X}=\{{\x}_{i}\}_{i=1}^M$ with $\x_i \in \R$, the {order} $n$,  {$\boldsymbol{\tilde{f}} = [f(\x_1), \dotsc, f(\x_M)]^T \in  \R^M$.}
\\ \textbf{Output}: $\Q  \in \R^{M \times N}, \boldsymbol{H}  \in \R^{N \times (N-1)}$, the coefficient of the approximation $\boldsymbol{d} \in \R^{N}$.
\\Initialize $\Q $ and $\boldsymbol{H}$ as zero matrices; Set $[\Q ]_{:,1}$ as a $M \times 1$ matrix of ones. 
\\ \textbf{For} $l = 1, 2, \dotsc, n$
\\ $\qquad$ $\boldsymbol{v} := \diag(\x_1, \dotsc, \x_M)[\Q ]_{:,l};$
\\ $\qquad \quad$\textbf{For} $t = 1, 2$ \text{(Carry out classical Gram-Schmidt twice)} 
\\ $\qquad$ $\qquad$ $\boldsymbol{s}  :=\frac{1}{M} [\Q ]^*_{:,1:l} \boldsymbol{v};\quad $ $\boldsymbol{v}:=\boldsymbol{v} -   [\Q ]_{:,1:l}\boldsymbol{s};$
\\ $\qquad$ $\qquad$ $[\boldsymbol{H}]_{1:l,l} :=[\boldsymbol{H}]_{1:l,l} + \boldsymbol{s};$
\\$\qquad \quad$ \textbf{end} 
\\$\qquad$ $[\boldsymbol{H}]_{{l+1},{l}}:= \frac{1}{\sqrt{M}} \|\boldsymbol{v}\|_2; \quad   [\Q ]_{:,{l+1}}:=\boldsymbol{v}/\|\boldsymbol{v}\|_2$;
\\ \textbf{end} 
\\ $\boldsymbol{d}=  \Q  \backslash \boldsymbol{\tilde{f}}. $ Solve the least-squares problem by MATLAB backslash command. }
\caption{\small Polynomial Fitting using Univariate Vandermonde with Arnoldi (Adapted from~\cite{brubeck2021vandermonde})
\label{V+A Algo}}
\end{algorithm}

\begin{algorithm}[ht]\small
{\textbf{Input} :   Evaluation points $\boldsymbol{Y}=\{{\y}_i\}_{i=1}^{K}$ with $\y_i \in \R$, the order $n$, $\boldsymbol{H}  \in \R^{N \times (N-1)}$, $\boldsymbol{d} \in \R^{N}$; 

\textbf{Output}: ${\boldsymbol{p}} \in \R^{K}$ which is the vector of values at evaluation points $\boldsymbol{Y}$. 

Initialize $\boldsymbol{U}$ as a $K\times N$ zero matrix and set $[\boldsymbol{U}]_{:,1}$ as a $K \times 1$ matrix of ones;
\\ \textbf{For} $l = 1, 2, \dotsc, n$
\\$\qquad$ $\boldsymbol{v}:= \diag(\y_1, \dotsc, \y_K)[\boldsymbol{U}]_{:,l};$
\\$\qquad$ $\boldsymbol{v}:=\boldsymbol{v}- [\boldsymbol{U}]_{:,1:l}[\boldsymbol{H}]_{1:l,l};\quad$ $[\boldsymbol{U}]_{:,l}:=\boldsymbol{v}/[\boldsymbol{H}]_{l+1,l};$
\\ \textbf{end} 
\\ $\boldsymbol{{p}}:=\boldsymbol{Ud}$}
\caption{\small Evaluating Functions for Univariate Approximation~\cite{brubeck2021vandermonde}}
 \label{eval Algo}
\end{algorithm}
}

\subsection{Applications and Numerical Examples for Univariate V+A}
{In the first paper on V+A~\cite{brubeck2021vandermonde}, several applications of the univariate V+A method are provided, including interpolation at Chebyshev points, least-squares approximation on two intervals, Fourier extension, Laplace equation, and conformal mapping.} In addition to these applications presented in  \cite{brubeck2021vandermonde}, we explore three additional examples of V+A least-squares polynomial approximations.

\begin{example} \textbf{(Disjoint Domain)}
 Approximating  $f(x)=x\cos(10x)$ using $M=N^2$ equispaced sample points in a disjoint domain $[-3,-1] \cup [3, 4]$. This example challenges the algorithm's ability to handle a disjoint domain. We compare the V+A method to the Vandermonde method in the left plot of Figure \ref{1d}. Initially, the two approximations give the same error. However, the Vandermonde system has an error stagnating at $10^{-4}$ for $N>30$ due to ill-conditioning of $\A $, while the V+A method gives an error reduction down to $10^{-15}$ as $N$ increases.
\end{example}

\begin{example} 
\textbf{(Non-Smooth Function)} Approximating $f(x) = |x|$ in $[-1, 1]$ using $M=N^2\log N$ random sample points. This example tests the algorithm on approximating a non-smooth function with random uniform sample points. Similar to \textit{Example 1}, V+A also yields a much better approximation than the Vandermonde approximation (the middle plot of Figure \ref{1d}). The error for V+A is also in line with the error of the best polynomial approximation ($\sim \frac{0.28}{n}$)~\cite{stahl1993best}. 
\end{example}

\begin{example} 
\textbf{(Infinite Domain)} Approximating $f(x)=e^x$ in $[-10^3,-10^{-3}]$ using $M=N^2$ logarithmic equispaced points. {The logarithmic equispaced points are defined as $M$ points in the form of $-10^{\Gamma}$, where $\Gamma$ is chosen as equispaced points in the interval $[-3, 3].$} This example focuses on sample points generated by a different measure over a wide interval. The Vandermonde method fails for this problem but the least squares approximation using the V+A algorithm gives a stable error reduction for all $N$ (the right plot of Figure \ref{1d}). Unlike some methods used in~\cite[Sec. 4]{Transplant} which involves transplantation of the domain, the least squares approximation using V+A Algorithm is carried out directly on $[-10^3,-10^{-3}]$. This example illustrates that the V+A algorithm can adapt to different domains and different discrete measures. 
\end{example}

\begin{figure}[!ht]
\centering
\includegraphics[width=15cm]{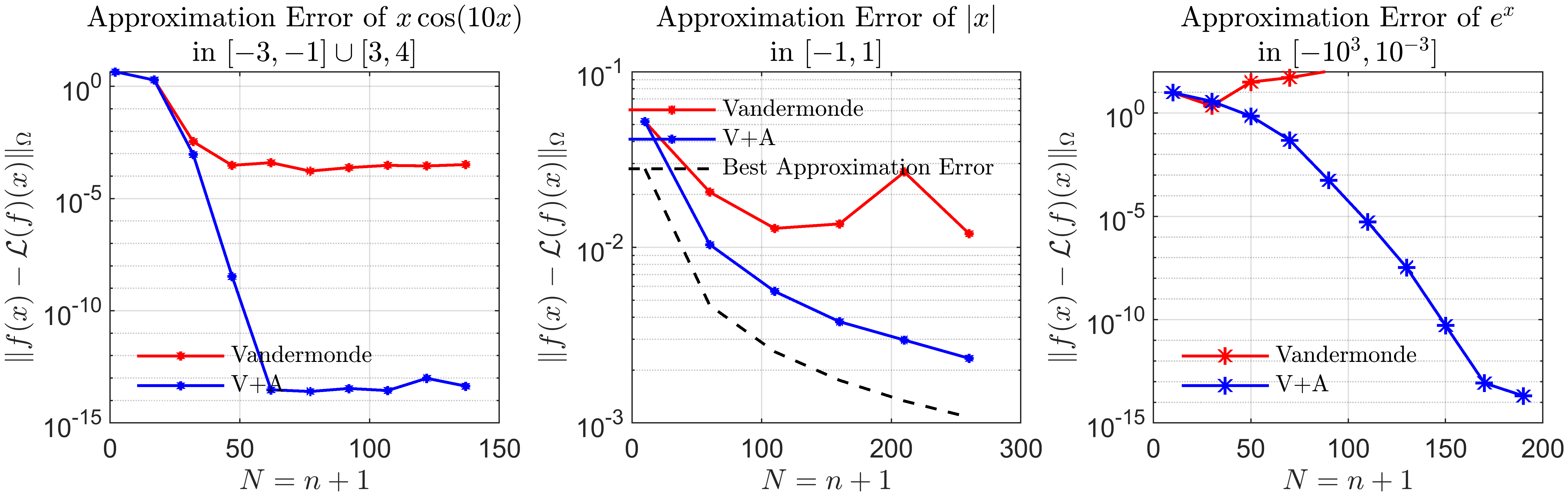}
\caption{\small The least squares approximation with the V+A method is computed using the univariant V+A (Algorithm \ref{V+A Algo}). The Vandermonde approximation is computed using \texttt{polyfit/polyval} provided
in MATLAB. }
\label{1d}
\end{figure}

\section{Multivariate Vandermonde with Arnoldi} 
\label{section Multivariate VA} 

The V+A method can be readily extended to higher dimensions ($d>1$). 
Let $\Omega \in \R^d$ be the domain, and $\x = [x_{(1)}, \dotsc, x_{(d)}]^T \in {\R}^{d}$ be the variables. For instance, for the term $x_{(1)}^{\alpha_1} x_{(2)}^{\alpha_2} \dotsc x_{(d)}^{\alpha_d}$, we have $0\le \alpha_r \le n$ for all $r = 1, \dotsc, d$. 
We   denote the associated multi-index set as $\boldsymbol{\alpha}= [\alpha_1, \alpha_2, \dotsc, \alpha_d]^T \in {[\mathbb{Z}{[0, n]}]}^{d}$ and $n$ as the {order of index} of the polynomial space. Each index $\alpha_r \in \mathbb{Z}{[0, n]}$ is an integer between $0$ to $n$ inclusively.  Let $\P_{d,n}$ denote the general representation of the polynomial spaces where $\x = [x_{(1)}, \dotsc, x_{(d)}]^T \in {\R}^{d}$ and {$n$ is the highest order in the index.} We give the definition of two polynomial spaces as follows:
\begin{eqnarray*}
\text{Maximum Degree $\P^M_{d,n}$ with indices} \quad && \mathcal{I}^P: = \bigg\{\boldsymbol{\alpha} \in {[\mathbb{Z}_{[0, n]}]}^{d}, \max_{1 \le r \le d} \alpha_r\le n\bigg\},
\\
\text{Total Degree $\P^T_{d,n}$ with indices}  \quad && \mathcal{I}^T: = \bigg\{\boldsymbol{\alpha} \in {[\mathbb{Z}_{[0, n]}]}^{d}, \sum_{r=1}^d \alpha_r\le n \bigg\}.
\end{eqnarray*}
In these polynomial spaces, the maximum sum of components of $\boldsymbol{\alpha}$ and the number of basis functions are 
\begin{eqnarray}
\label{a1 1}
\text{Maximum Degree $\P^M_{d,n}$:} \quad && \|\boldsymbol{\alpha}\|_1=nd, \quad N:=|\mathcal{I}^P| = (n+1)^d,
\\
\text{Total Degree $\P^T_{d,n}$:}  \quad && \|\boldsymbol{\alpha}\|_1=n, \quad N:=|\mathcal{I}^T| = \binom{n+d}{n}
\label{a1 2}
\end{eqnarray}
where $N$ denotes the number of basis functions in each polynomial space. We also refer to the total degrees of freedom as $N$.  
In this paper, the theory applies to multivariate functions for all $d \ge 1$. The numerical experiments primarily focus on the case $d=2$. Unless otherwise stated, the numerical examples for $d=2$ are conducted in the total degree polynomial space with $N := \frac{1}{2}(n+1)(n+2)$. Examples for $d > 2$ are presented in Figure \ref{fig highD example} and discussed in Example \ref{highD example}.

\begin{remark}
Although the number of basis functions in the polynomial spaces satisfies $N = \mathcal{O}(n^d)$ as $n \rightarrow \infty$, for a fixed pair $(n, d)$, the number of basis elements, $N$, is larger when we employ the maximum degree compared to the total degree. 
{The total degree space is a subset of the maximum degree space, $\P^T_{d,n}  \subseteq \P^M_{d,n}$.}
\end{remark}

\begin{remark} \textbf{(Norms and Notation)}
For $\mathbf{x} \in \R^d$ and a domain $\Omega \subset \R^d$, we provide the following definitions for norms. 
    \begin{enumerate}
    \item For a finite set of points $\boldsymbol{X}=\{\x_i\}_{1 \le i \le M}$ and bounded functions $f, g: \boldsymbol{X} \rightarrow \R$,  we define\textit{ the infinity $\boldsymbol{X}$-norm} of $g: \boldsymbol{X} \rightarrow \R$ as $\|g\|_{\boldsymbol{X}}=\max_{\x \in \boldsymbol{X}}|g(\x )|$ and $\langle f , g \rangle_M =  \frac{1}{M}\sum_{i=1}^M f(\x_i) g(\x_i)$. 

    \item   For any bounded domain $\Omega$, we define \textit{the infinity $\Omega$-norm} of a bounded function $g: \Omega \rightarrow \R$ as $\|g\|_{\Omega}=\sup_{\x \in\Omega}|g(\x )|$. 
    \item $p^*$ is the best approximation for $f$ in $\P_{d,n}$ if $p^*$ satisfies $\|f(\mathbf{x}) - p^*(\mathbf{x})\|_{\Omega} \le \|f(\mathbf{x}) - p(\mathbf{x})\|_{\Omega}$ for all $p \in \P_{d,n}$. 
    \item $ \|\nabla p(\x)\|_{\Omega,2}: = \sup_{\x \in \Omega} \|\nabla p(\x)\|_2  =  \sup_{\x \in \Omega} \bigg(\sum_{r=1}^d \big|\frac{\partial p}{\partial x_{(r)}}(\x) \big|^2\bigg)^{1/2}$.
    \item   $ \|\nabla p(\x)\|_{\Omega}: = \sup_{\x \in \Omega} \|\nabla p(\x)\|_{\infty}= \sup_{\x \in \Omega}  \bigg(\max_{1 \le r \le d} \big|\frac{\partial p}{\partial x_{(r)}}(\x) \big|\bigg)$. $\|\nabla p(\x)\|_{\Omega} \le \|\nabla p(\x)\|_{\Omega, 2}$. 
    \end{enumerate}
    \label{notation}
\end{remark}

The key distinction between multivariate V+A and univariate V+A is that the multivariate monomial basis does not correspond to a Krylov subspace. Consequently, the multivariate monomial basis lacks a canonical ordering, meaning there is no universal order to list the columns of a multivariate Vandermonde matrix.
Therefore, in the multivariate V+A algorithm, we employ specific basis-ordering strategies. A new column is created by carefully selecting one coordinate from the sample points 
to form a diagonal matrix, which is then multiplied to a particular preceding column. Subsequently, we orthogonalize this new column against previous columns using the Gram-Schmidt process. {Specifically, let the set of multi-indices be \(\mathcal{I}\), and let \(\mathcal{I}[k]\) denote the \(k\)th element of the multi-index set and $\mathcal{I}[1] =\textbf{0} \in \R^n$. For $l= 1,2, \dotsc$, we select \(k\) by finding the smallest \(k\) such that \(\mathcal{I}[l + 1]:=\mathcal{I}[k] + e_r \), where \(e_r\) is the \(r\)th column of the identity matrix. Such orderings are not unique. For instance, for total degree polynomials, a grevlex ordering (which orders terms first by total degree and then lexicographically) satisfies this condition. For a third-degree total degree polynomial with two variables, the ordered basis could be
$
\mathcal{I} = \{(0,0), (1,0), (0,1), (2,0), (1,1), (0,2), (3,0), (2,1), (1,2), (0,3)\}.
$
}
}More details are given in \cite[Sec 2.1]{Hokanson}. The complete algorithm for multivariate V+A is also provided in \cite{Hokanson} and outlined in Algorithm \ref{Multi V+A Algo} for completeness. 

\begin{algorithm}[ht]\small
{\textbf{Input}: Sample points $\boldsymbol{X}=\{\x_{i}\}_{i=1}^M$ with $\x_i\in  \R^d$, total degrees of freedom $N$, the index set $\mathcal{I}$ with length $N$, the function values at $\boldsymbol{X}$ i.e., $\boldsymbol{\tilde{f}} \in  \R^M$.
\\ \textbf{Output}: $\Q  \in \R^{M \times N}, \boldsymbol{H}  \in \R^{N \times N}$, the coefficient $\boldsymbol{d} \in \R^{N}$.
\\Set $\Q $ and $\boldsymbol{H}$ as zero matrices; Set $[\Q ]_{:,1}$ as a $M \times 1$ matrix of ones and  $[R]_{1,1}=1$. 
\\ \textbf{For} $l = 1, 2, \dotsc, |\mathcal{I}|-1$
\\ $\qquad$ Choose the smallest $k$ such that $\exists r$ where     $ \mathcal{I}[l + 1]:=\mathcal{I}[k] + e_r $;
 \\ $\qquad$ $\boldsymbol{v} := \diag({x_{(r)}}_1, \dotsc, {x_{(r)}}_M)[\Q ]_{:,k};$
\\ $\qquad \quad$ \textbf{For} $t = 1, 2$
\\ $\qquad$  $\qquad$  $\boldsymbol{s}  := \frac{1}{M} [\Q ]^*_{:,1:l} \boldsymbol{v};\qquad$ $\boldsymbol{v}:=\boldsymbol{v} -  [\Q ]_{:,1:l}\boldsymbol{s};$
\\ $\qquad$ $\qquad$  $[\boldsymbol{H}]_{1:l,l+1} :=[\boldsymbol{H}]_{1:l,l+1} + \boldsymbol{s};$
\\   $\qquad \quad$ \textbf{end}
\\
$\qquad$ $[\boldsymbol{H}] _{{l+1},{l+1}}:=\frac{1}{\sqrt{M}}\|\boldsymbol{v}\|_2; \quad [\Q ]_{:,{l+1}}:=\boldsymbol{v}/\|\boldsymbol{v}\|_2$;
\\\textbf{end}
\\$\boldsymbol{d}=   \Q  \backslash \boldsymbol{\tilde{f}}. $ Solve the least-squares problem by MATLAB backslash command. }
\caption{\small Polynomial Fitting using Multivariate V+A~\cite{Hokanson}}
\label{Multi V+A Algo}
\end{algorithm}

\begin{algorithm}[ht]\small
{
\textbf{Input} :   Evaluation points $\boldsymbol{Y}=\{\boldsymbol{y}_i\}_{i=1}^{K}$ with each $\boldsymbol{y}_i\in \R^d$, $\mathcal{I}$, $\boldsymbol{H}  \in \R^{N \times N}$, $\boldsymbol{d} \in \R^{N}$; 

\textbf{Output}: ${\boldsymbol{p} } \in \R^{K}$ is the vector of values at evaluation points $\{\boldsymbol{y}_i\}_{i=1}^{K}$. 

Initialize $\boldsymbol{U}$ as $K\times N$ zero matrix and set $[\boldsymbol{U}]_{:,1}$ as a $K \times 1$ matrix of ones;
\\ \textbf{For} $l = 1, 2, \dotsc, |\mathcal{I}|-1$
\\ $\qquad$  Choose the smallest $k$ such that $\exists r$ where    $ \mathcal{I}[l + 1]:=\mathcal{I}[k] + e_r $; 
\\ $\qquad$  $\boldsymbol{v} := \diag({y_{(r)}}_1, \dotsc, {y_{(r)}}_K)[\boldsymbol{U}]_{:,k};$
\\ $\qquad$  $\boldsymbol{v}:=\boldsymbol{v}- [\boldsymbol{U}]_{:,1:l}[\boldsymbol{H}]_{1:l,l+1};\quad$ $[\boldsymbol{U}]_{:,l+1}:=\boldsymbol{v}/[\boldsymbol{H}]_{l+1,l+1};$
\\ \textbf{end}
\\ $\boldsymbol{{p}}:=\boldsymbol{Ud}$}
\caption{\small Evaluating Function for Multivariate Approximation~\cite{Hokanson}}.
 \label{Multi eval Algo}
\end{algorithm}

\subsection{Applications and Numerical Examples for Multivariate V+A}

In \cite{Hokanson}, the multivariate V+A algorithm is applied for rational approximations on standard 1D or 2D domains. In this section, we provide numerical examples demonstrating the application of the multivariate V+A algorithm for polynomial approximation on 2D irregular domains, tensor product domains, and higher-dimensional domains ($d>2$).

\begin{example}
\textbf{(V+A Polynomial Approximation on 2D Tensor Product Domain)} In Figure \ref{2D example}, we plot the results of approximating a smooth function $f(x_{(1)},x_{(2)})=\sin(\frac{x_{(1)}^2+x_{(2)}^2+x_{(1)}x_{(2)}}{5})$ in a tensor-product domain using $M=N^2$ equispaced points\footnote{More details on the sampling complexity are discussed in Section \ref{Deterministic sample points}.}. The Vandermonde's least-squares system quickly becomes highly ill-conditioned at higher degrees, causing the error of the Vandermonde method to stagnate at $\O(1)$. On the other hand, the least squares approximation using the multivariate V+A method gives a stable error reduction\footnote{The approximation errors are measured in an equispaced mesh $\boldsymbol{Y}=\{\boldsymbol{y}_i\}_{1\le i \le K}$ with $K = 3 M$, that is, finer than the sample points.} down to $10^{-15}$ for $N=500$. Also, as shown in the middle plot of Figure \ref{2D example}, the error obtained from the multivariate V+A is small throughout the domain with no spikes near the boundary. This is because the discrete orthogonal basis generated by the Arnoldi orthogonalization is well approximated by the continuous orthogonal basis with $M=\O(N^2)$ points. 

\begin{figure}[!ht]
    \centering
\includegraphics[width=15cm]{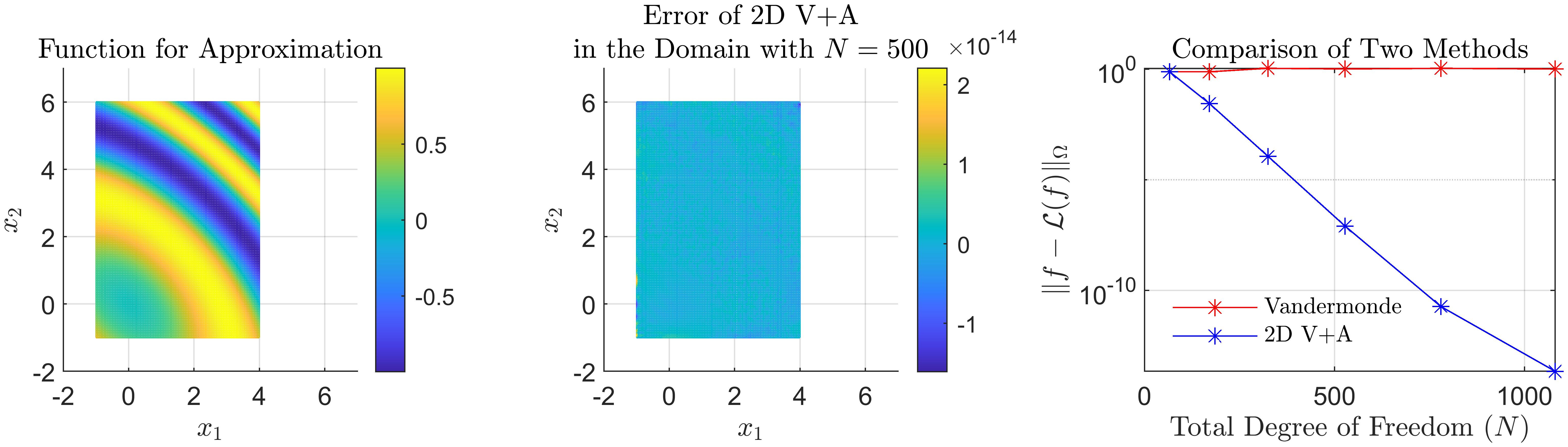}
    \caption{\small Approximating $f(x_{(1)},x_{(2)})=\sin(\frac{x_{(1)}^2+x_{(2)}^2+x_{(1)}x_{(2)}}{5})$ on $[-1, 4] \times [-1, 6]$ using equispaced mesh with $M=N^2$.}
    \label{2D example}
\end{figure}
\end{example}

\begin{example}
\textbf{(V+A Polynomial Approximation on 2D Irregular Domain)} We approximate the same function on an elliptical domain. We sample the points using the rejection sampling method. Namely, we enclose the domain $\Omega$ in a hypercube domain $\Omega_{cube}=[0,4]\times [0,6]$ and draw independent and identically distributed (i.i.d.) random samples from the uniform probability measure on ${\Omega_{cube}}$. 
The acceptance rate for our domain is $\text{Area}(\Omega)/\text{Area}(\Omega_{cube}) \approx 59.59\%$. We end the rejection sampler once we have a total of $M=N^2\log N$ sample points\footnote{More details on the sampling complexity are discussed in Section \ref{Randomized sample points}.}. We plot the bivariate approximation result for the irregular domain in Figure \ref{2D example 2}. Again, the Vandermonde method fails for any $N>100$, but the least squares approximation using the V+A method gives a stable error reduction to $10^{-15}$ for $N=600$. Since the Arnoldi orthogonalization generates a basis in the domain, the error for the least-squares approximation does not worsen when we switch from the tensor-product domain to the irregular domain. This example illustrates that the V+A algorithm can adapt to different discrete measures and non-tensor product domains.

\begin{figure}[!ht]
    \centering
\includegraphics[width=14.5cm]{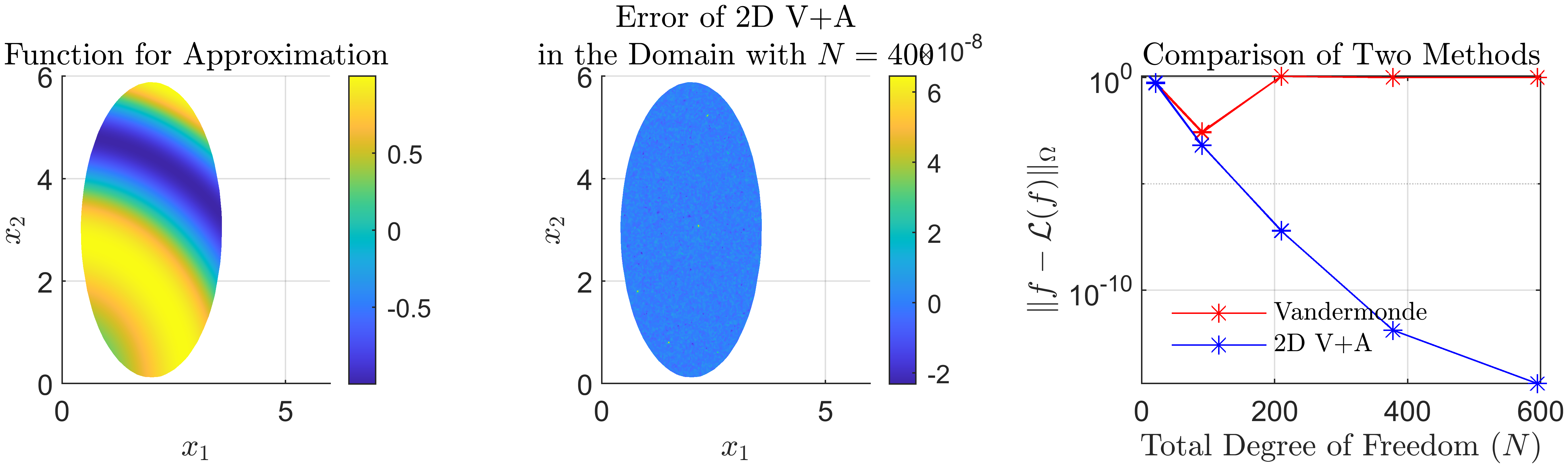}
    \caption{\small Approximating $f(x_{(1)},x_{(2)})=\sin(\frac{x_{(1)}^2+x_{(2)}^2+x_{(1)}x_{(2)}}{5})$ on an elliptical domain ({Domain $4$ in Figure \ref{domain}}) using randomized mesh with $M=N^2\log N$ (See Theorem \ref{Ramdon AM} for more details on sample complexity). Here and throughout this paper, we plot the mean of $25$ random trials when the sample points are random. Unless otherwise stated, we use the total degree polynomial space for approximation in numerical experiments.}
    \label{2D example 2}
\end{figure}
\end{example}

\begin{example}
\textbf{(V+A Polynomial Approximation for 2D Non-Differential Functions)} We test the least squares approximation with the V+A method on a non-differential function $f(x_{(1)},x_{(2)})=|x_{(1)}-2||x_{(2)}-3|$ in the irregular domain. We found that the error reduction for the non-smooth function is much slower than the error reduction for the smooth function as illustrated in the right plot of Figure \ref{2D example 3}. The slow convergence in the error reduction is not caused by the least squares approximation using the V+A method; it is because polynomials are not great for approximating non-smooth functions. In other words, no polynomials in the space $\P^T_{d,n}$ can converge rapidly to such non-differentiable functions. Additionally, the smoothness requirement of the function is stricter for higher dimensional domains. As we observe from the right plot of Figure \ref{2D example 3}, the 2D least-squares approximation of a non-smooth function exhibits a slower error reduction than the 1D least-squares approximation of a non-smooth function. The theoretical results for convergence and explanations of these phenomena are discussed in Section \ref{sec Convergence Rate} and Theorem \ref{thm convergence rate equispaced}. 
\end{example}

\begin{example}
\label{example 3.4}
\textbf{(Orthogonal Basis on $\Omega$ vs. on $\Omega_{\text{cube}}$)}
We compare the least squares approximation using the V+A method with the least squares approximation using the orthogonal basis on the bounding domain (referred to as the `orthogonal basis on the bounding domain approximation'). In the orthogonal basis on the bounding domain approximation, we enclose the irregular domain in a hypercube domain, $\Omega \subset \Omega_{\text{cube}}$, and create an orthonormal basis for the bounding tensor-product domain using orthonormal Legendre polynomials\footnote{{This method is somewhat close to polynomial frame approximation~\cite{Adcock2019}, however it is not the same as we do not use regularization, e.g. truncated SVD, as here we aim to observe the effect of the V+A orthogonalization. More generally, we view an extensive comparison between frame approximation and V+A-based approximation a topic for future work.}}.
We generate a numerical example using the function and the domain shown in Figure \ref{2D example 2}. In the numerical experiments, the 2D orthonormal Legendre polynomial basis is generated by sampling equispaced points on the tensor product domain and using orthogonalization techniques (such as QR or Gram-Schmidt process) to obtain the discrete orthogonal Legendre polynomials. An illustration of the sample points where the orthogonal polynomials are generated is given in Figure \ref{figure illustration of sample points}. 

As given in the left plot of Figure \ref{2D example 3}, both methods provide a well-conditioned and accurate approximation. Using the same number of sample points $M=N^2 \log N$, the orthogonal basis on the bounding domain approximation is slightly less accurate than the least squares approximation with the V+A method. This is partly because, in the orthogonal basis on the bounding domain approximation, the sample points are distributed across the entire tensor-product domain $\Omega_{\text{cube}}$, with only about 59.59\% of the sample points inside the domain $\Omega$. 
In contrast, in the least squares approximation with the V+A method, all sample points are within $\Omega$ and contribute to the approximation. 

Even if we increase the number of sample points for the orthogonal basis on the bounding domain approximation to $M=\frac{1}{0.5959}N^2 \log N$ to have approximately the same number of sample points within $\Omega$, the orthogonal basis on the bounding domain approximation with increased sample points remains slightly less accurate. This is because the Legendre polynomials are orthonormal for the tensor domain rather than for the elliptical domain. The orthonormal basis is no longer orthogonal when restricting the Legendre polynomials to an elliptical subdomain.
\end{example}

\begin{figure}[!ht]
    \centering
\includegraphics[width=7cm, height = 4.5cm]{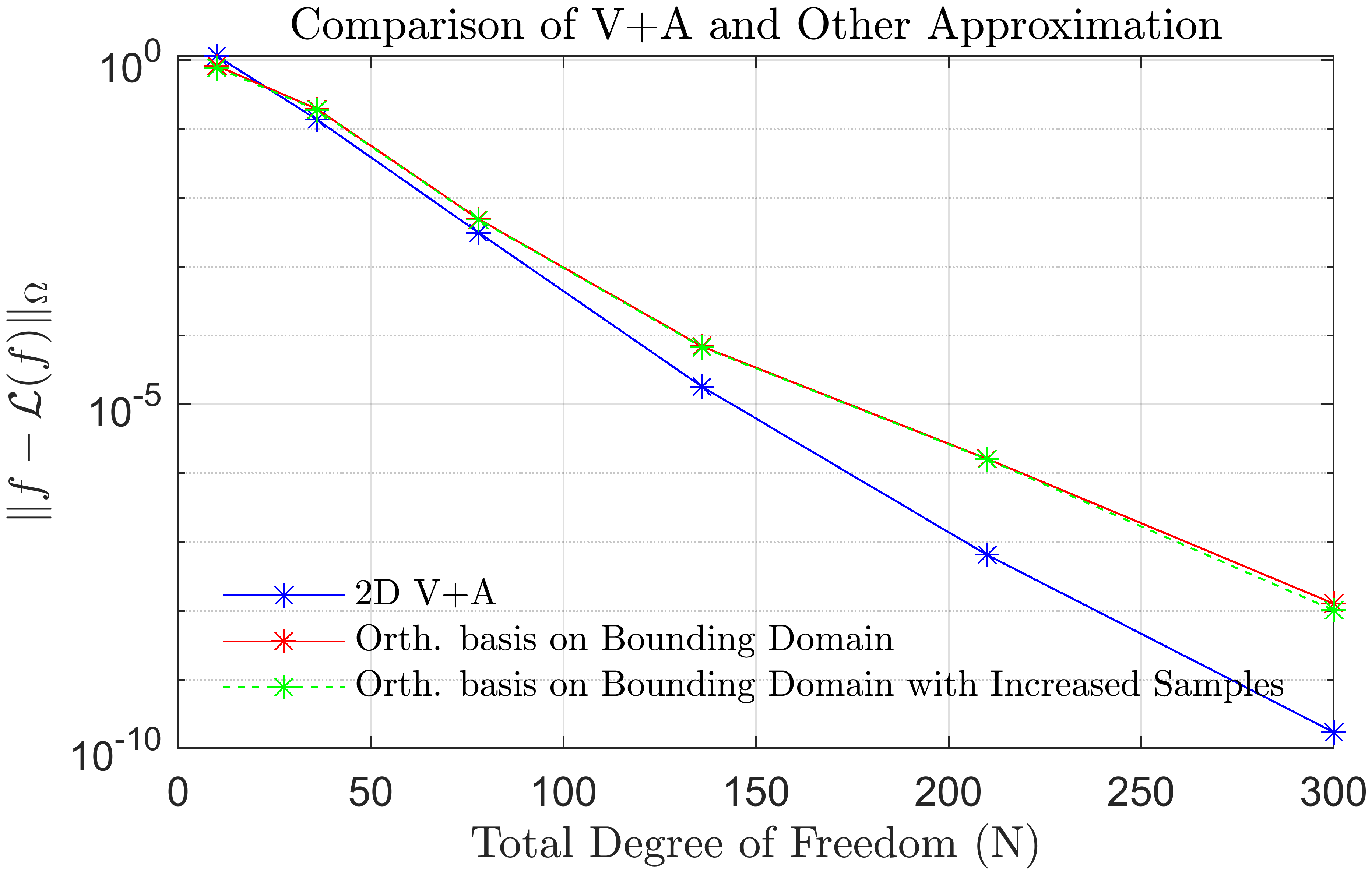}
\includegraphics[width=7cm, height = 4.5cm]{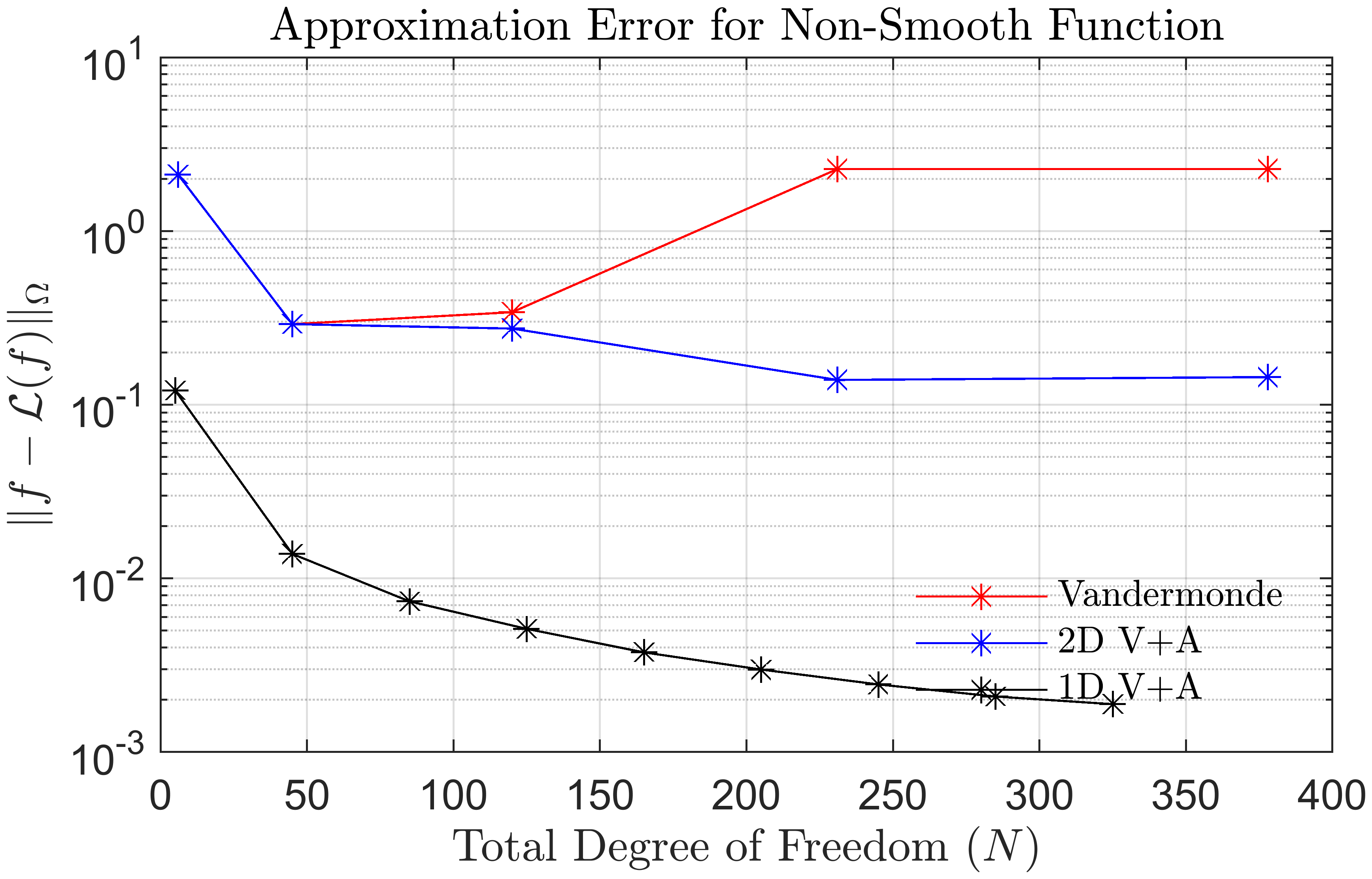}
    \caption{\small \textbf{Left}: Comparison of {the orthogonal basis on the bounding domain approximation and the least squares approximation with the V+A method} for $f(x_{(1)},x_{(2)})=\sin(\frac{x_{(1)}^2+x_{(2)}^2+x_{(1)}x_{(2)}}{5})$ on an elliptical domain. \textbf{Right}: Approximating non-smooth function $f(x_{(1)},x_{(2)})=|x_{(1)}-2||x_{(2)}-3|$ on an elliptical domain and $f(x)=|x-2|$ in $[0,4]$, both {using $\O(N \log N)$ randomized sample points}.}
    \label{2D example 3}
\end{figure}

\begin{figure}[h!]
    \centering
    \includegraphics[width=10cm]{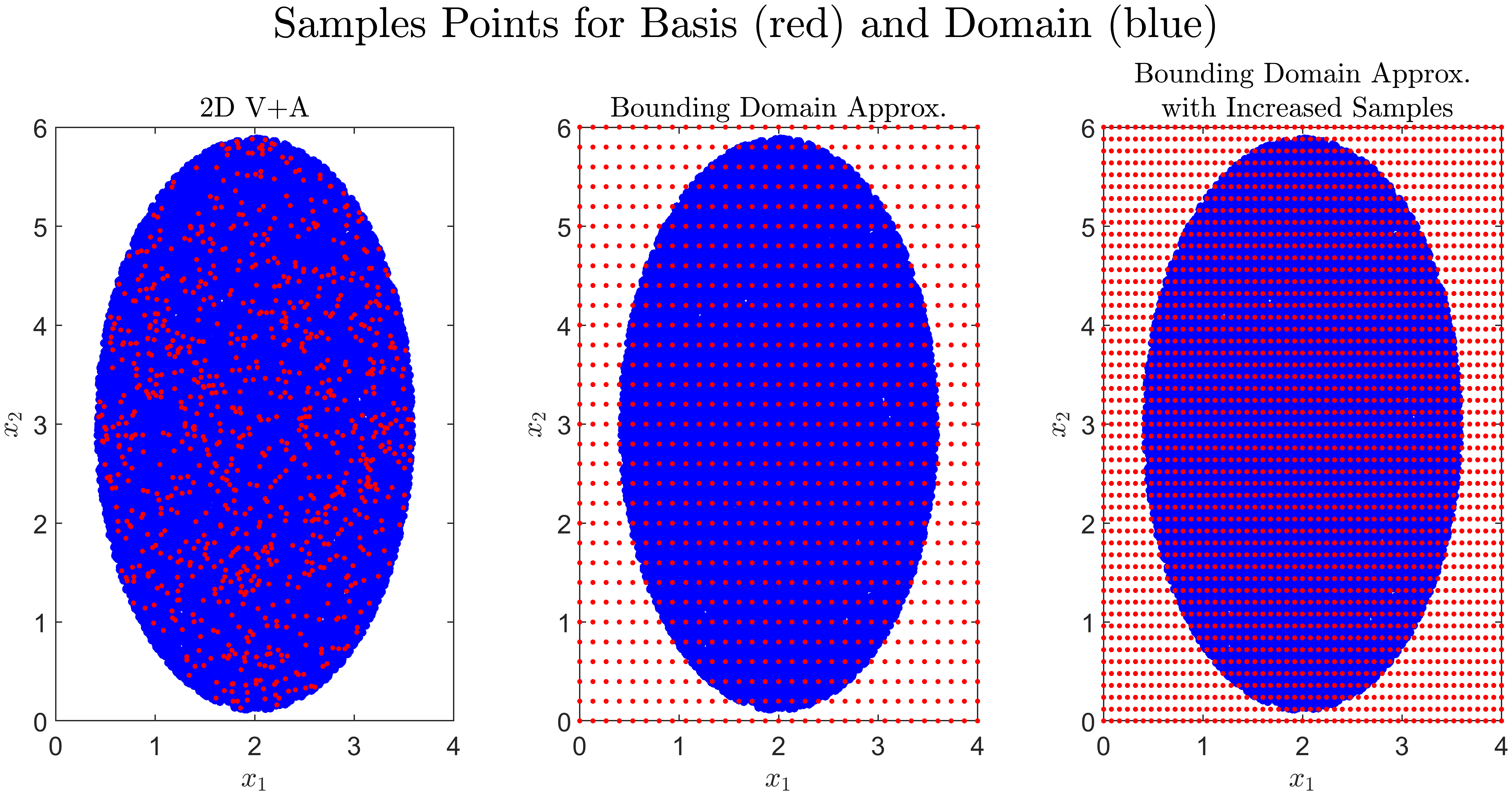}
    \caption{\small {An illustration of the sample points where discrete orthogonal polynomials are generated for Example \ref{example 3.4}. In V+A, the bases are generated by sample points from the elliptical domain, represented by red dots in the first plot. In the orthogonal basis on the bounding domain approximation, the bases are Legendre polynomials on the tensor product domain generated by red dots in the second and third plots. The first and second plots have the same number of sample points.  The first and third plots have the same number of sample points inside the elliptical domain. }}
    \label{figure illustration of sample points}
\end{figure}

\begin{example} 
    \label{highD example}
{\textbf{(V+A Polynomial Approximation for Higher Dimensional Multivariate Functions)} Figure \ref{fig highD example} presents preliminary numerical experiments on using V+A for high-dimensional function approximation (specifically, for $d>2$). In the cases of $d=3$ and $d=5$, the least squares approximation using the V+A algorithm consistently reduces error for both tensor product and irregular domains, while the Vandermonde approximation is inaccurate due to ill-conditioning.
These examples highlight the effectiveness of least-squares approximations using the V+A algorithm in higher-dimensional multivariate functions and non-tensor product domains. It is important to note that the multivariate V+A algorithm (Algorithm \ref{Multi V+A Algo}) and the theories we prove in this paper for multivariate V+A  are applicable for $d\ge 2$. However, due to the growth of the polynomial basis with the dimensionality of functional approximation (i.e., $N = \O(n^d)$ for both polynomial spaces), the CPU time required for numerical experiments on higher-dimensional multivariate functions is affected by the curse of dimensionality. 
To overcome this, one may need to combine V+A with e.g. sparse grids~\cite{bungartz2004sparse}. The design of practical and scalable algorithms for high-dimensional functional approximation is a task we defer to future work.}

\begin{figure}[!ht]
    \centering
\includegraphics[width=5cm, height = 3.9cm]{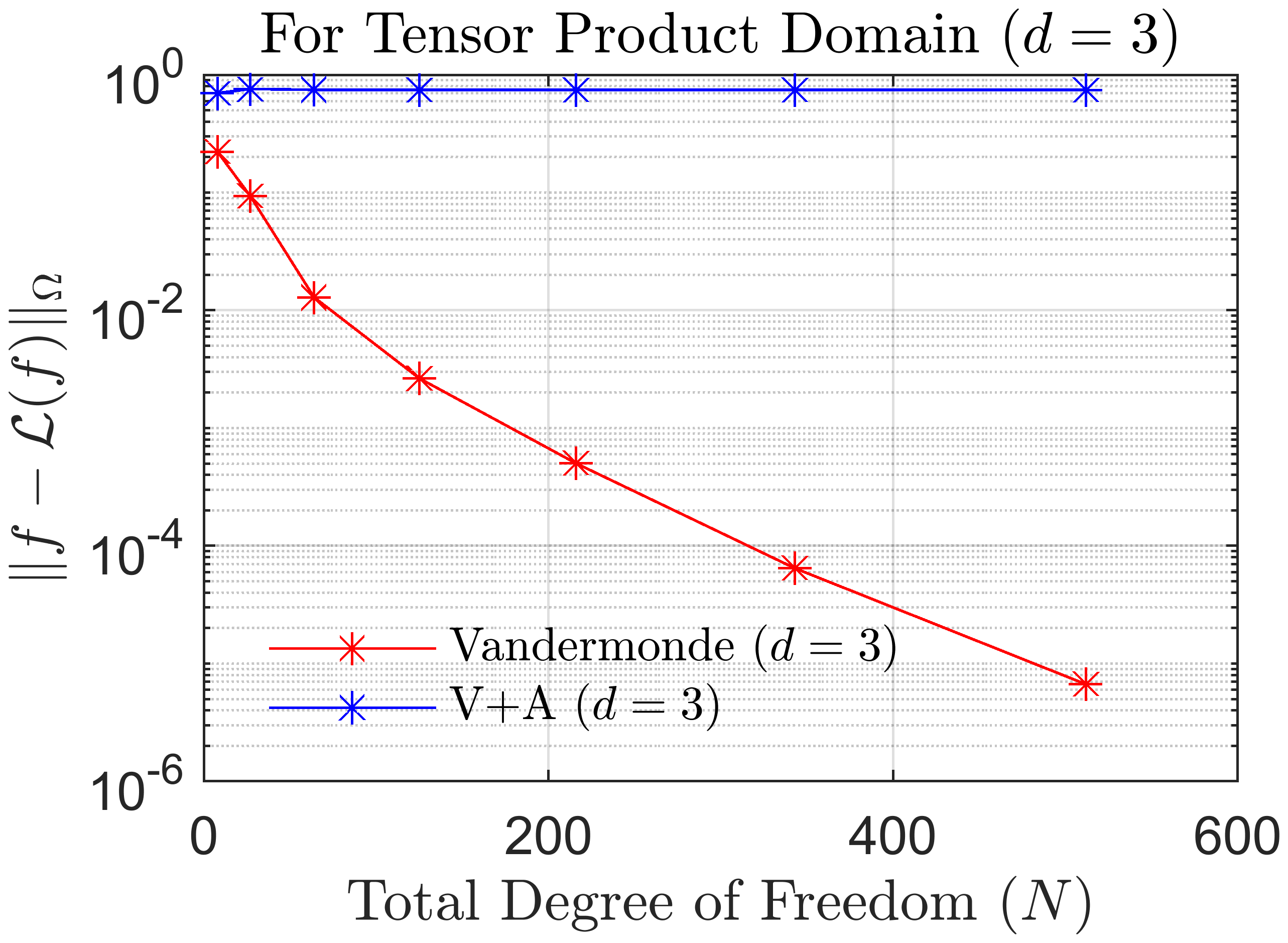}
\includegraphics[width=5cm, height = 3.9cm]{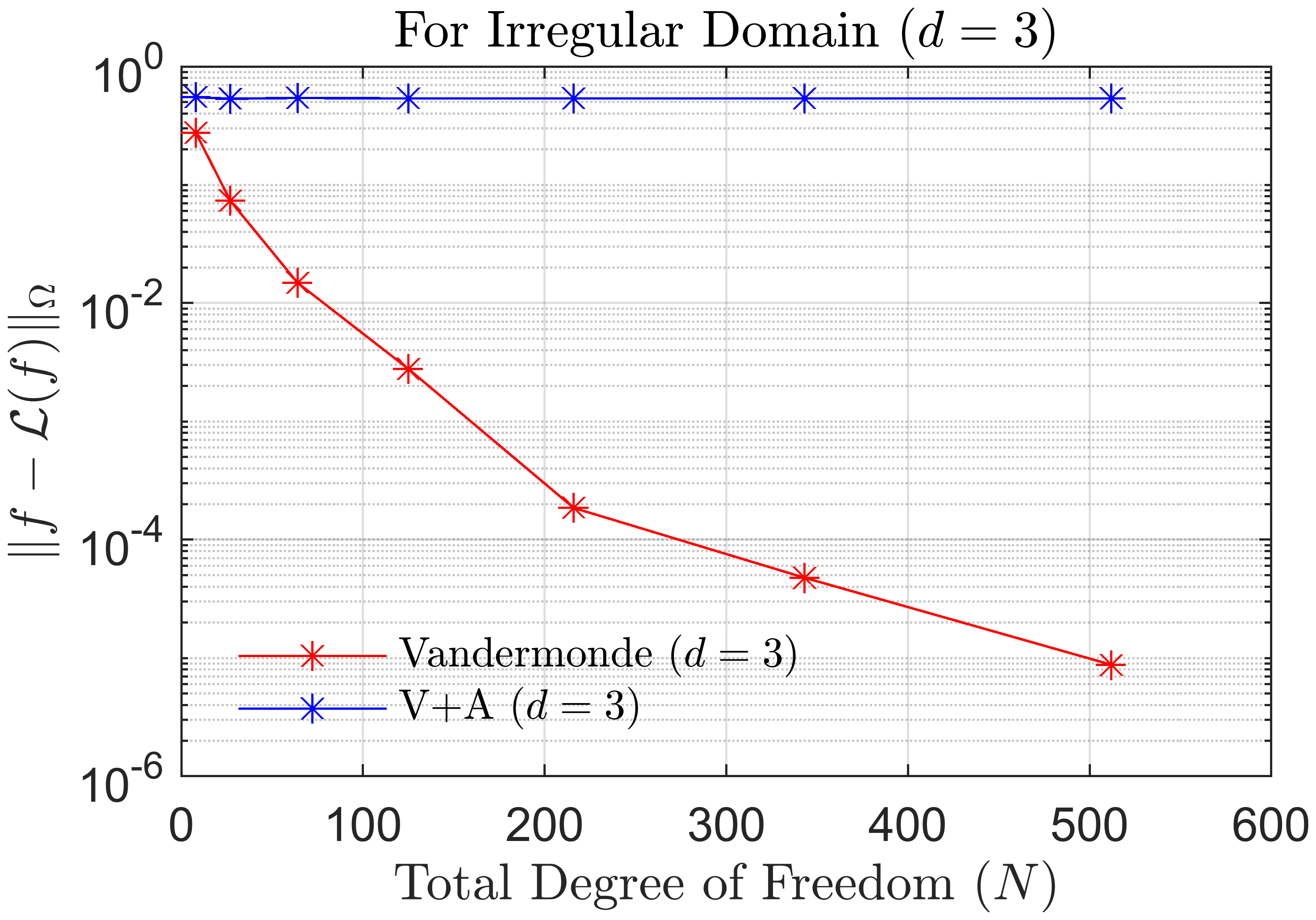}
\includegraphics[width=5cm, height = 3.9cm]{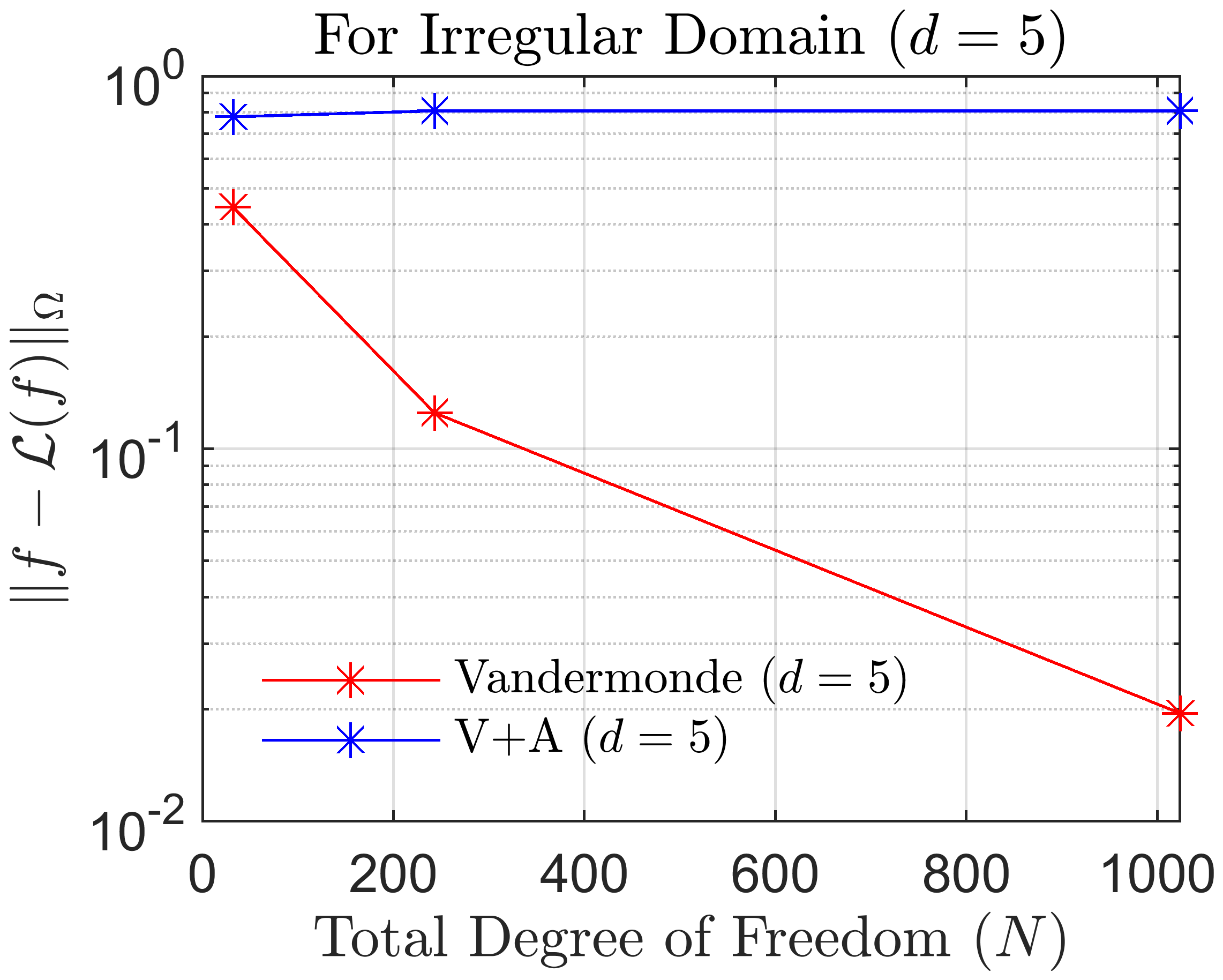}
\caption{\small Approximating $f(\mathbf{x})=\sin(\|\mathbf{x}\|^2)$ with $\mathbf{x} \in \R^d$, where $d=3, 5$, using $p \in \P^M_{d,n}$. The tensor product domain is defined as $[0, 1]^d$, and the irregular domain is defined as $\|\tilde{\mathbf{w}}^T\mathbf{x}\| \le 1$, where $\tilde{\mathbf{w}} \in \R^d$ with entries uniformly distributed in $[0, 1]$. {The sample points are equispaced points in the domain; additional details on sample complexity are discussed in Section \ref{sec: sampling}.}}
\label{fig highD example}
\end{figure}
\end{example}

\section{Sample Complexity and Convergence of Multivariate V+A}
\label{Section leb constant}

In this section, we establish the sample complexity and convergence properties of the least squares approximation using {the V+A algorithm for the total degree space. Note that the analysis follows naturally for the maximum degree of polynomial space, up to $d$-dependent constants.} We demonstrate that, across many domains, utilizing either $M = \O(N^2)$ equispaced points or $M = \O(N^2 \log N)$ random sample points, the V+A approximant with $N$ degrees of freedom is near-optimal. Namely, the error of the approximation is within a polynomial factor of that of the best approximation, for instance, $\|f-\mathcal{L}(f)\|_{\Omega}= \O(N)\|f-p^*\|_{\Omega}$ where {the $\O(\cdot)$ notation involves constants that depend on $d$.}
Furthermore, we prove that, when the sample complexity condition is satisfied, the error of the least squares approximation using the V+A algorithm converges at a spectral rate. Specifically,
\begin{equation}
     \|f - \mathcal{L}(f)\|_{\Omega}  = \O(n^{d-k})
\end{equation}
where $d$ is the dimension of the variables (i.e., $x \in \R^d$), and $k$ is the highest order of the derivative of $f$ that exists and is continuous (i.e., smoothness).

\noindent

\textbf{The proofs and key contributions for this section are structured as follows.}

\begin{enumerate}
\item  We first prove that convex bodies or finite unions of convex bodies satisfy the Markov property {for total degree polynomial space (and also maximum degree space)}, such that
\begin{equation}
\tag{Markov Condition}
 \|\nabla p(\x)\|_{\Omega}:= \sup_{\x \in \Omega} \bigg(\max_{1 \le r \le d} \bigg|\frac{\partial p}{\partial x_{(r)}}(\x) \bigg|\bigg) \le \M(\Omega) n^2 \| p \|_{\Omega}
\label{Markov property with degree 2}
\end{equation}
where $\M(\Omega)$ is a domain-dependent constant. We highlight that this Markov property for the domain and the polynomial space is crucial for proving the sample complexity. 
 {Note that the constant for the Markov property (for total degree polynomial space and maximum degree space) differs by a factor depending on $d$ (Section \ref{sec: Markov}). }
 
\item Under the Markov property condition, we can form a set of $M = \O(n^{2d}) = \O(N^2)$ equispaced sample points, such that the maximum absolute value of the polynomial in $\Omega$ is bounded by a constant multiplying the maximum absolute value of the polynomial at sample points $\boldsymbol{X}$, $\|p\|_{\Omega} \le 2\|p\|_{\X}$ (Section \ref{sec: sampling}). 
To the best of our knowledge, the Markov property and complexity results of this kind have only been established for total polynomial spaces in the literature \cite{Markov, kellogg1928bounded, kroo2010approximation, wilhelmsen1974markov}, but not specifically for maximum degree space.

\item Then, we demonstrate V+A gives a near-optimal convergence rate (Section \ref{sec Convergence of Multivariate Vandermonde with Arnoldi}) {for the total degree polynomial spaces}\footnote{
{In exact arithmetic, the V+A approximation is the same as that of a least-squares solution based on any basis, as long as the span of the bases is the same. The convergence results that therefore apply to any method based on least-squares, without the V+A orthogonalization. Of course, in finite precision, V+A can make a dramatic difference. For this reason, while the convergence results would hold for a least-squares method employing any basis, we often specify the use of V+A, as otherwise the theory may not be reflected in actual computations. 
}}.

\item At the end of the section, we provide a comparison between the sample complexity of the least squares approximation using the V+A Algorithm and the sample complexity of polynomial frame approximation~\cite{frame2, Adcock2019, cohen, cohenweight, migdisorth} (Section \ref{sec Related Work on Sample Complexity}). 

\end{enumerate}

\subsection{Markov Inequality}
\label{sec: Markov}

In this section, we demonstrate that \eqref{Markov property with degree 2} holds for a variety of domains. We start with the simplest domain with $d=1$. For univariate polynomial space, the two polynomial spaces overlap, $\P^M_{1,n} = \P^T_{1,n}.$ Thus, the Markov condition for $d=1$ is given in  \cite{wilhelmsen1974markov}. 
\begin{lemma}
\label{Lemma 3.1}
\textbf{{(Markov Condition for $d=1$)}}
For $x \in [-\zeta, \zeta]$ with $\zeta>0$, then 
$
\big|p'(\x)\big| \le \frac{n^2}{\zeta} \max_{x \in [-\zeta, \zeta]}|p(\x)|. $
\label{1d markov}
\end{lemma}

For $d > 1$, we show that the Markov condition also holds using Lemma \ref{lemma markov} and Theorem \ref{Theorem markov}.

\begin{lemma} \textbf{{(Markov Condition for $d>1$)}}
\label{lemma markov}
Let $\Omega \subset \R^d$ be a convex body, i.e., a convex compact set with a non-empty interior. 
{Let $\boldsymbol{\alpha} = [\alpha_1, \dotsc, \alpha_d]\in\mathbb{N}_0^{d}$ and $p$ be a polynomial consisting of terms of at most $x_{(1)}^{\beta_1} \dotsc x_{(d)}^{\beta_d}$ with $\beta_i \le  \alpha_i$ for $ 1 \le i \le d$.}
Then, the following results hold.
\begin{enumerate}
    \item Let $\x^* \in \Omega$ such that $\|\nabla p(\x^*)\|_2 =     \|\nabla p \|_{\Omega, 2} = \sup_{\x \in \Omega} \|\nabla p(\x)\|_2$. Assume also $p(\x^*)=1$, then 
$$
            \|\nabla p \|_{\Omega} \le \frac{2\|\boldsymbol{\alpha}\|_1^2}{\omega(\Omega)}.
            \label{3 space bound}
$$
Here $\omega(\Omega)$ is the minimal distance between two parallel supporting hyperplanes for $\Omega$ and {$\|\boldsymbol{\alpha}\|_1 = \sum_{i=1}^d \alpha_i$.} 

\item For any multivariate polynomial $p(\x)$ with order $n$ and $\|p\|_{\Omega}\le 1$ without the condition $p(\x^*)=1$, a similar inequality holds true
$
\|\nabla p \|_{\Omega} \le \frac{4\|\boldsymbol{\alpha}\|_1^2}{\omega(\Omega)}. 
$
\end{enumerate}
\end{lemma}

\begin{proof}
We start by proving the first result. 
We omit the trivial case when $p$ is constant. Under the assumption that  $\|\nabla p(\x^*)\|_2 =     \|\nabla p \|_{\Omega, 2} = \sup_{\x \in \Omega} \|\nabla p(\x)\|_2$, if  $p$ is not a constant, this implies that  $\|\nabla p(\x^*)\|_2 >0 $, we must have $\x^*$ on the boundary  $\x^* \in \partial \Omega$. 

Let $\u = \frac{\nabla p(\x^*) }{\|\nabla p(\x^*)\|_2}  $, and let $P_{\u}$ be
the hyperplane with normal {$\u$} which passes through $\x^*$ .
\cite[Lemma 3.1]{wilhelmsen1974markov} proved that  $P_{\u}$ is a supporting hyperplane of $\Omega$. Namely, 
$
P_{\u} = \{\x \in \R^d, (\x-\x^*)^T \u=0\}
$
and $P_{\u}$ contains no interior points of $\Omega$.
Now, assume $\u$ is an outer normal for $\Omega$, and let $P_{-\u}$ be the supporting hyperplane with outer normal $-\u$ lying a distance $\lambda_\u$
from $P_{\u}$. Let $\x^0 \in P_{-\u} \cap \Omega$. The line segment $[\x^0, \x^*]$ lies in $\Omega$ by convexity, and
\begin{align}
    \text{$p(\x^* + \lambda \w)$, where $\w = \frac{(\x^0 -\x^*)}{\|\x^0 -\x^*\|_2}$, is a polynomial in $\lambda$ with order $\|\boldsymbol{\alpha}\|_1$.}
    \label{key diff step}
\end{align}
This polynomial is bounded by $1$ for $\lambda \in \big[0, \|\x^0 -\x^*\|_2 \big]$. Since polynomials are translation invariant, Lemma \ref{1d markov} yields
\begin{eqnarray*}
    \frac{2\|\boldsymbol{\alpha}\|_1^2}{ \|\x^0 -\x^*\|_2} \ge \bigg|\frac{d}{d \lambda} p(\x^* + \lambda \w)\bigg| =\bigg|\nabla p(\x^*)^T \w\bigg| = \frac{\lambda_\u}{ \|\x^0 -\x^*\|_2} \| \nabla  p(\x^*) \|_2
\end{eqnarray*}
where the last equality comes from $\nabla  p(\x^*) ^T (\x^0 -\x^*) = \lambda_\u \nabla  p(\x^*) ^T \u = \lambda_\u \|\nabla  p(\x^*) \|_2$. Also, since $\omega(\Omega)$ is the minimal distance between two parallel supporting hyperplanes for $\Omega$, we have {$ \lambda_\u \ge \omega(\Omega) >0$}. Therefore,
$
    \frac{2\|\boldsymbol{\alpha}\|_1^2}{ \|\x^0 -\x^*\|_2} \ge \frac{\omega(\Omega) }{ \|\x^0 -\x^*\|_2}  \| \nabla  p(\x^*) \|_2.
$
By the definitions of $\Omega$-norm and $\x^*$, we deduce that
$\| \nabla  p(\x^*) \|_{\Omega} \le \| \nabla  p(\x^*) \|_{\Omega, 2} \le  \frac{2\|\boldsymbol{\alpha}\|_1^2}{\omega(\Omega) }$. Extending the first result to the second result follows the same proof as in \cite[Thm 3.1]{wilhelmsen1974markov}. 
\end{proof}

\begin{remark}
Lemma \ref{lemma markov} is adapted from \cite[Lemma 3.1 \& Thm 3.1]{wilhelmsen1974markov}. In \cite{wilhelmsen1974markov}, the theorem was originally designed for the total degree polynomial space, and the bounds were given as:
$$
    \|\nabla p \|_{\Omega} \le \frac{2{n^2}}{\omega(\Omega)}
, \qquad
    \|\nabla p \|_{\Omega} \le \frac{4{n^2}}{\omega(\Omega)}
$$ for Results 1--2, respectively. 
When the polynomial space is the total degree space, $\|\alpha\|_1 = n$, and Lemma \ref{lemma markov} reduces to \cite[Lemma 3.1 \& Thm 3.1]{wilhelmsen1974markov}.
Our theorem also applies to the maximum degree where  $\|\alpha\|_1 = nd$.
\end{remark}

\begin{theorem}
Let $d\ge 1$, and assume $\Omega \subset \R^d$ is a convex body. The multivariate polynomial $p$ with order $n$ satisfies the Markov condition in $\Omega$, 
\begin{equation}
\tag{Markov Condition}
\|\nabla p \|_{\Omega} \le \M(\Omega) n^2 \|p\|_{\Omega}
    \label{markov}
\end{equation}
where $\M(\Omega) = \frac{4 d^2}{\omega(\Omega)}$ for the maximum degree space, and $\M(\Omega) = \frac{4}{\omega(\Omega)}$ for the total degree space. $\omega(\Omega)$ is the minimal distance between two parallel supporting hyperplanes for $\Omega$. 
\label{Theorem markov}
\end{theorem}
\begin{proof}
This is a direct application of Lemma \ref{lemma markov} up to a scaling function of $ \| p \|_{\Omega}$. $M(\Omega)$ is obtained by the expression of $\|\boldsymbol{\alpha}_1\|$ in \eqref{a1 1}--\eqref{a1 2}.
\end{proof}

\subsection{Sampling}
\label{sec: sampling}

We prove in this section that if $M = \O(n^{2d}) = \O(N^2)$ equispaced sample points are chosen from a domain of a convex body or finite unions of convex bodies, then the maximum absolute value of the polynomial in $\Omega$ is bounded by a constant multiplying the maximum absolute value of the polynomial at sample points ${\boldsymbol{X}}$, $  \|p\|_{\Omega} \le 2\|p\|_{\X}.  $
Since the constant for the Markov property $\M(\Omega)$ only differs by a factor depending on $d$ (Theorem \ref{Theorem markov}), the same order of sample complexity results apply to {both the maximum degree and total degree spaces up to a $d$-dependent constant}. 
In a high-dimensional domain, it is sometimes easier to generate sample points randomly from the domain by rejection sampling. Therefore, in Section \ref{Randomized sample points}, we prove that with $\O(N^2 \log N)$ random sample points $\tilde{\X}$ generated i.i.d. from a uniform measure, $  \|p\|_{\Omega} \le 2\|p\|_{\tilde{\X}}$ also holds with high probability {for the maximum degree and total degree spaces up to a $d$-dependent constant}.

 \subsubsection{Deterministic Sample Points} 
  \label{Deterministic sample points} 

{We define equispaced discretization in Definition \ref{def 1}} and the sample complexity for equispaced sample points is given in Theorem \ref{thm 3.3}. 

{

\begin{definition}
\textbf{(Equispaced Discretization)}  
Let $\Omega \subset \mathbb{R}^n$ be a compact domain and $\Omega_{cube} \subset \mathbb{R}^n$ a bounding box containing $\Omega$. An \textit{equispaced discretization} of $\Omega$ is constructed by first defining an equispaced grid of points on $\Omega_{cube}$, and subsequently restricting this grid to the points contained within $\Omega$. We write $\{\x_i\}_{i=1}^N$ for the set of equispaced points in $\Omega_{cube}$. 
\label{def 1}
\end{definition}}

\begin{theorem}
\label{thm 3.3}
Let $\Omega \subset \R^d$ with $d>1$ be a convex body {(i.e., a convex compact set with a non-empty interior)} or finite unions of convex bodies. Let $0 < c \le \frac{1}{2\sqrt{d}}$ be a fixed constant.
Construct a set of $M$ sample points ${\boldsymbol{X}} = \big\{ \x_i |\x_i \in \Omega \subset \R^d \big\}_{1\le i \le M}$ such that for all $\x \in \Omega$, we have
\begin{equation}
\tag{Mesh Condition}
\inf_{\x_i \in \X} \|\x_i-\x\|_{\infty} \le \frac{c}{\M(\Omega) n^2}   \label{mesh}
\end{equation}
where {$\M(\Omega) = \frac{4 d^2}{\omega(\Omega)}$} for the maximum degree space, and {$\M(\Omega) = \frac{4}{\omega(\Omega)}$} for the total degree space. Then, 
\begin{eqnarray}
   \|p\|_{\Omega} \le 2\|p\|_{\X}
   \label{bound for polynomial at sample points}
\end{eqnarray}
for all multivariate polynomials. {Using equispaced discretization as in Definition \ref{def 1}}, the cardinality of $|\X|$ satisfies $|\X| = \O(n^{2d}) = \O(N^2)$, where $N$ is the number of basis functions. {Note that the $\O(\cdot)$ notation involves constants that depend on $d$.}
\end{theorem}

\begin{proof}
The proof is similar to \cite{Markov} (3.13)--(3.18). We have sharpened the upper bound for $c$ in \eqref{mesh} by a factor of $\sqrt{d}$.  For any fixed $p \in \P_{d,n}$  (i.e., maximum degree, total degree), let $ \x_*  = [{(x_*)}_1, \dotsc, {(x_*)}_d]^T\in \Omega$ such that the maximum value is attached at this point $\|p\|_{\Omega} = |p(\x_*)|$ and for all $1 \le i \le M$ we have $\x_i  \in \X$ such that $\|\x_*-\x_i\|_{\infty} \le \frac{c}{\M(\Omega)n^2}$. Then, using the mean value theorem, there exists $\boldsymbol{c}$ on the line connecting $\x_i$ to $\x_*$ such that
\begin{eqnarray}
 \big|p(\x_*)-p(\x_i)\big| = |\nabla p(\boldsymbol{c})^T (\x_*-\x_i)|
 \le \big\|\nabla p\big\|_{[\x_i, \x_*]} \big\|\x_*-\x_i\big\|_2  \le \sqrt{d} \big\|\nabla p\big\|_{[\x_i, \x_*]} \big\|\x_*-\x_i\big\|_{\infty}
 \label{sharpen bound}
\end{eqnarray}
where $[\x_i, \x_*]$ represents the line connecting $\x_i$ to $\x_*$ and $\big\|\nabla p\big\|_{[\x_i, \x_*]} =  \sup_{\x \in [\x_i, \x_*]} \bigg(\sum_{r=1}^d \big|\frac{\partial p}{\partial x_{(r)}}(\x) \big|^2\bigg)^{1/2}$.  

Since $\Omega$ is convex, we have $[\x_i, \x_*] \subset \Omega$. Theorem \ref{Theorem markov} gives that $\big\|\nabla p\big\|_{[\x_i, \x_*]} \le \M(\Omega)n^2 \|p\|_{\Omega}$. Together with  \eqref{mesh}, we deduce that
 \begin{eqnarray*}
 \big|p(\x_*)-p(\x_i)\big|\le \sqrt{d} \bigg(\M(\Omega)n^2 \|p\|_{\Omega} \bigg) \bigg(\frac{c}{\M(\Omega)n^2}\bigg) \le \sqrt{d} c \|p\|_{\Omega} \le \frac{1}{2} \|p\|_{\Omega}. 
\end{eqnarray*}
where the last inequality comes from $0 < c \le \frac{1}{2\sqrt{d}}$. Hence, 
$$
\big\|p\big\|_{\Omega} - \big|p(\x_i)\big| =     \big|p(\x_*)\big|-\big|p(\x_i)\big|  \le     \big|p(\x)-p(\x_i)\big|  \le \frac{1}{2} \|p\|_{\Omega}.  
$$
We deduce that $\|p\|_{\Omega} \le 2 \|p\|_{\X}$.
To prove the second argument regarding the cardinality of the mesh, we construct the mesh $\X$ using a standard
discretization process. For $\Omega \in \R^d$, we enclose $\Omega$ in a $d$-dimensional hypercube $\Omega_{cube}\in \R^{d}$. We then cover $\Omega_{cube}$ with equispaced grids dense enough such that \eqref{mesh} is satisfied. This gives us $\O(n^{2d})=\O(N^{2})$ number of nodes for the mesh. 
Lastly, the extension from a convex domain to a domain with the finite union of convex bodies is a direct application of \cite[Lemma 4(i)]{Markov}. 
\end{proof}

\begin{remark} \textbf{(Equispaced Sampling Method)}
Definition \ref{def 1} and Theorem \ref{thm 3.3} provide us with a way to discretize the domain and choose sample points that can control the size of the discrete orthogonal polynomial. For $\Omega \in \R^d$, we enclose $\Omega$ in a $d$-dimensional hypercube $\Omega_{cube}\in \R^{d}$. We then cover $\Omega_{cube}$ with equispaced grids dense enough such that \eqref{mesh} is satisfied which gives us $\O(n^{dr})=\O(N^{r})$ number of nodes.
\label{sample method}
\end{remark}

Note that the deterministic sampling method can be further improved in two ways. Firstly, instead of sampling in the hypercube domain  $\Omega_{cube}$, one could sample directly on $\Omega$ using rejection sampling. Details on the randomized sampling method are discussed in Section \ref{Randomized sample points}. Secondly, instead of taking equispaced samples, we can choose a set of sample points with a `near-optimal weight'. Details on near-optimal sampling strategy are discussed in Section \ref{section Weighted LSA}.

\subsubsection{Randomized Sample Points} 
\label{Randomized sample points} 
Theorem \ref{Ramdon AM} gives the sample complexity for random sample points. 

\begin{theorem}\textbf{(Randomized Sampling Method)}
Let $\Omega \subset \mathbb{R}^d$ with $d > 1$ be a convex body or finite union of convex bodies. Let $\tilde{\X}=\{\tilde{\x}_1, \tilde{\x}_2, \dots, \tilde{\x}_{\tilde{M}}\}$ be a set of points independently and uniformly sampled from $\Omega$. Let $\tilde{c} > 1$, and set ${\tilde{M}} \ge \tilde{c} M\log(M)$. Then, $ \|p\|_{\Omega} \le 2\|p\|_{\tilde{\X}}$ with probability at least $1-M^{-\tilde{c}}$ for the total degree polynomial spaces\footnote{The same order of result is true for the maximum degree polynomial spaces, up to a $d$-dependent constant.}. 
\label{Ramdon AM}
\end{theorem}

\begin{proof}
We prove this theorem using a similar approach to \cite[Thm 4.1]{RandomizedXu}. For the set of random sample points $\boldsymbol{\tilde{X}}$ to satisfy \eqref{mesh}, we need to prove that with high probability for every $\x \in \Omega$, one can find a point in $\boldsymbol{\tilde{X}}= \{\tilde{\x }_i\}_{1\le i \le \tilde{M}}$ such that their distance is at most  $E: = \frac{c}{\M(\Omega) n^2} $. Specifically, for every $\x \in \Omega$, $\boldsymbol{\tilde{X}} \cap \bar{B}_{E}(\x )  \neq \varnothing$ is satisfied with high probability where  $\bar{B}_{E}(\x_i)$ denotes the closed ball with center $\x_i$ and radius $E$. We have 
\begin{align*}
\mathbb{P} \left[ \boldsymbol{\tilde{X}} \cap \bar{B}_{E}(\x ) \neq \varnothing \right] = 1-\big( \mathbb{P}\left[ \tilde{\x}_i \cap \bar{B}_{E}(\x ) = \varnothing \right]\big) ^{\tilde{M}} \ge 1- \left(1 - |\X| ^{-1}\right)^{\tilde{M}} \ge 1- \left(1 - M^{-1}\right)^{\tilde{M}} \ge 1-e^{-\tilde{M}/M}. 
\end{align*}
The first equality follows from the i.i.d sampling nature of the random sample points. Note that $|\X| = M $ denotes the cardinality of $\X$. The first inequality is given in the proof of \cite[Thm 4.1]{RandomizedXu}. By taking $\tilde{M} \ge \tilde{c} M\log(M)$ with $\tilde{c}>1$, we achieve $\boldsymbol{\tilde{X}} \cap \bar{B}_{E}(\x ) \neq \varnothing $ with probability at least $1-M^{-\tilde{c}}$. Applying Theorem \ref{thm 3.3} completes the proof.  
\end{proof}



\subsection{Convergence of Multivariate Vandermonde with Arnoldi}
\label{sec Convergence of Multivariate Vandermonde with Arnoldi}

Theorem \ref{thm 3.3} and Theorem \ref{Ramdon AM} can be readily used to prove the convergence results for V+A.

\subsubsection{Error Bound}
\begin{theorem} 
Let $f:  \Omega \rightarrow {\R^d}$ be a continuous multivariate function. Let $\Omega \subset \R^d$ with $d>1$ be a convex body or finite unions of convex bodies. Let {$p^* \in \P^T_{d,n}$ be the best polynomial approximation of $f$ in the total degree polynomial space, such that $\|f-p^*\|_{\Omega}\leq\|f-p\|_{\Omega}$ for all {$p \in  \P^T_{d,n}$}. Construct a set of equispaced sample points as described in Remark \ref{sample method}, ${\boldsymbol{X}} = \big\{ \x_i \big\}_{1\le i \le M} \subset \Omega$ with  $M = \O(N^2)$.
{Applying} Algorithm \ref{Multi V+A Algo} using the sample points $\X$, the error of the least squares approximation satisfies\footnote{The bound for the maximum degree space can be deduced similarly up to a $d$-dependent constant.}}
\begin{equation}
\|f-\mathcal{L}(f)\|_{\Omega} \le \left(1+  \frac{2}{M}  \|\Q  \Q^*\|_{\infty}\right) \|f-p^*\|_{\Omega} \le \bigg( 1+\O(N)\bigg) \|f-p^*\|_{\Omega}
\label{kate bound}
\end{equation}
{where the columns of $\Q$ are the values of the discrete orthogonal basis generated by V+A at the sample points $\X$.} The matrix $\infty$-norm is defined as the largest absolute value of the matrix.  
\label{thm 4.3}
\end{theorem}

\begin{proof} 
Denote {$g:=f-p^*$}. We have 
\begin{equation}
\|f - \mathcal{L}(f)\|_{\Omega} \le \|\underbrace{f- p^*}_{:= {g}} \|_{\Omega}+ \|\underbrace{\mathcal{L}(f-p^*)}_{= {\mathcal{L}(g)}\in {\P^T_{d,n}}}\|_{\Omega}.
\label{thm 4.3 1}
\end{equation} 
Since $\mathcal{L}(g)\in {\P^T_{d,n}}$, we write it as a linear combination of discrete orthogonal polynomials $\mathcal{L}(g):= \sum_{j=1}^N \beta_j \phi_j=\Q  \boldsymbol{\beta}$ where $\boldsymbol{\beta}:=[\beta_1, \dotsc, \beta_N]^T\in \R^N. $ According to \eqref{sol d}, $\boldsymbol{\beta}=\frac{1}{M}\Q^* {\tilde{g}}$ {where  $\tilde{g} \in \R^M$ is a vector with entries as $\{f(\x_i)-p(\x_i)\}_{1 \le i \le M}$.}  Using Theorem \ref{thm 3.3}, we have
\begin{eqnarray}
\|\mathcal{L}(g)\|_{\Omega}  \le 2  \|\mathcal{L}(g)\|_{\boldsymbol{X}} = 2 \|\Q  \boldsymbol{\beta}\|_{\infty}= \frac{2}{M} \|\Q  \Q^* \tilde{g}\|_{\infty}. 
\label{thm 4.3 2}
\end{eqnarray}
Substituting \eqref{thm 4.3 2} into \eqref{thm 4.3 1} gives the first inequality in \eqref{kate bound}. Using the matrix property $    \frac{1}{M} \|\Q  \Q^*\|_{\infty} < \frac{1}{\sqrt{M}}\|\Q  \Q^*\|_{2}= \sqrt{M}$ and the sample complexity assumption $M = \O(N^2)$, we arrive at the second inequality in \eqref{kate bound}.
\end{proof}

\begin{remark}
A similar bound was derived by {Calvi} and Levenberg in~\cite[Thm 2]{Markov} using Hermitian inner product norm, 
\begin{equation}
     \|f - \mathcal{L}(f)\|_{\Omega} \le \bigg(1+2(1+\sqrt{M})\bigg) \|f-p^*\|_{\Omega}.
    \label{mesh bound}
\end{equation}
Since $\frac{1}{M}\|\Q  \Q^*\|_{\infty} < \frac{1}{\sqrt{M}}\|\Q  \Q^*\|_{2}= \sqrt{M}$, it is straightforward to verify that our bound in Theorem \ref{thm 4.3} gives a sharper approximation than the bound in \eqref{mesh bound} by {Calvi} and Levenberg.
\end{remark}

\begin{example} 
\label{better numerical results}
\textbf{(Improved Numerical Error Bound on Certain Domains)} Due to the special properties of the V+A basis (i.e., $\Q $), for the intervals, 2D tensor product domain, and other domains (as indicated in Figure \ref{QQT}), our numerical examples suggest that the {least-squares approximant} gives a tighter error bound than \eqref{kate bound} in Theorem \ref{thm 4.3}, 
$$
\|f-\mathcal{L}(f)\|_{\Omega} \approx (1+\sqrt{2N}) \|f-p^*\|_{\Omega}. 
$$
In other words, with $M=\O(N^2)$ equispaced sample points, $\frac{1}{M}\|\Q  \Q^*\|_{\infty}$ scales like $\sqrt{2N}$ in the domains that we tested. 
{
For intervals and 2D tensor product domains the discrete orthogonal basis generated by $M = \mathcal{O}(N^2)$ equispaced sample points resembles the scaled Legendre polynomials, such that $\tilde{L}_j(\x) \approx \phi_j(\x)$ for $d=1$ and $\tilde{P}_j(\x) \approx \phi_j(\x)$ for $d \ge 2. $ The scaled Legendre polynomials for $d=1$ and $d \geq 2$  are defined as follows.

\begin{definition} {(Univariate Scaled Legendre Polynomials)} For any interval $[a ,b] \in \R$ with $a<b$,  we define the scaled univariate Legendre polynomials as
\begin{equation*}
   \tilde{L}_j(\x) = \gamma_j L_j(\eta(\x)), \qquad  \text{with}\qquad \gamma_j = \sqrt{2j-1}.
\end{equation*}
$\eta(\x) := \frac{2(\x-a)}{b-a}-1$ is the linear map from $[a, b]$ to $[-1, 1]$ and $L_j$ is the $j$th standard univariate Legendre polynomials. The scaled Legendre polynomials are orthonormal in $[a ,b]$, such that $  \int_{[a ,b]}\tilde{L}_j({x})\tilde{L}_k({x})d{x} = \delta_{j,k}$ for any $j,k=1,2,\dotsc$. 
\end{definition}

\begin{definition} {(Multivariate Scaled Legendre Polynomials)} For tensor--product domain $\Omega_d:=[a_{(1)}, b_{(1)}]\times \dotsc \times[a_{(d)}, b_{(d)}] \in \R^d$ with $a_{(r)} < b_{(r)}$ for $r = 1, \dotsc, d$,  we define the scaled multivariate Legendre polynomials as
\begin{equation*}
   \tilde{P}_j(\x) =  \Gamma_j P_j(\eta(\x)), \qquad  \text{with}\qquad  \Gamma_j=\prod_{r=1}^d \sqrt{2j_{(r)}+1}.
\end{equation*}
$\eta(\x) = (\eta_1(x_{(1)}), \dotsc, \eta_d(x_{(d)}))$ with $\eta_r(x_{(r)}) := \frac{2(x_{(r)}-a_{(r)})}{b_{(r)}-a_{(r)}}-1$ maps $\Omega_d$ to $[-1,1]^d$. Note that $P_j$ is the standard multivariate Legendre polynomials
\begin{equation*}
   P_j(\x)= \prod_{r=1}^d L_{j_{(r)}+1}(x_{(r)}), \quad  \x = (x_{(1)}, \dotsc, x_{(d)})\in [-1,1]^d. 
\end{equation*}
$j_{(r)}$ is the degree for the $r$th entry $r = 1, \dotsc, d$. The scaled Legendre polynomials are orthonormal in $\Omega_d$, such that $  \int_{\Omega_d}\tilde{L}_j({x})\tilde{L}_k({x})d{x} = \delta_{j,k}$ for any $j,k=1,2,\dotsc$. 
\end{definition}}

We give a brief justification for the tighter error bound than \eqref{kate bound} in these cases. Firstly, we rewrite $\frac{1}{M}\|\Q  \Q^*\|_{\infty}$ in expanded form
\begin{align*}
\frac{1}{M} \max_{1\le k \le M} \sum_{i=1}^M \left|\sum_{j=1}^N {\phi_j(\x_k)}\phi_j(\x_i)\right| \le \underbrace{\left(\max_{1 \le j \le N,1 \le k \le M}|{\phi_j(\x_k)}|  \right)}_{:=Q_{\max}} \underbrace{\left(\frac{1}{M} \sum_{i=1}^M \left|\sum_{j=1}^N \phi_j(\x_i)\right|\right)}_{:=S_N}. 
\end{align*}
The discrete orthogonal polynomials generated $M=\O(N^2)$ equispaced points in real intervals are well approximated by the scaled Legendre polynomials. Therefore, $Q_{\max} $ is well approximated by the size maximum absolute value of the scaled Legendre polynomials $Q_{\max} \approx \max_{1 \le j \le N}\sqrt{2j-1} = \sqrt{2N-1}$. On the other hand, $\sum_{j=1}^N \phi_j(\x_i)$ is small in the majority of the interval and only takes large values near the endpoints (Figure \ref{QQT2}). The supremum of $\sum_{j=1}^N \phi_j(\x_i)$ has a scaling of  $\sum_{j=1}^N\sqrt{2j-1}\approx\frac{(2N-1)^{3/2}}{3}$ in an interval of length $M^{-1}$. The mean value of the absolute sum of the discrete orthogonal polynomials, $S_N$, is relatively constant as $N$ grows. Our numerical experiment shows that $S_N \in [1.2, 1.3]$ when $N$ increases from $10$ to $300$. As a result, $Q_{\max}S_N$ together gives an upper bound for $\frac{1}{M}\|\Q  \Q^*\|_{\infty}$, that is, $\O(\sqrt{2N-1})$. Similar arguments follow for the 2D domains and the bound in 2D is obtained using $\max_{j,k}|{\phi_j(\x_k)}| \approx  \sqrt{2N}.$
\end{example}

{

\sidecaptionvpos{figure}{c}
\begin{SCfigure}[50][ht]
    \centering
    \includegraphics[width=9cm]{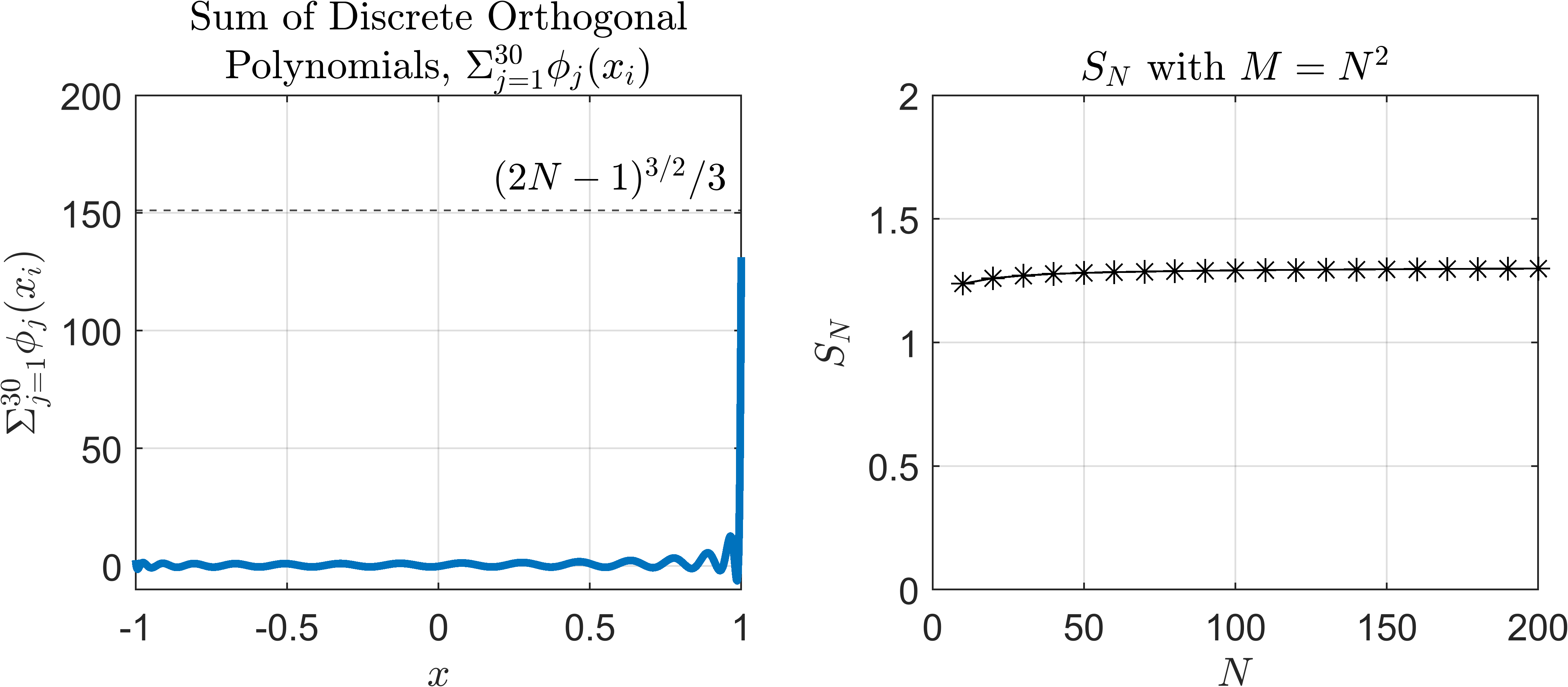}
    \caption{\small \textbf{Left}: Size of  $\sum_{j=1}^{30} \phi_j(\x_i)$ in $[-1, 1]$. \textbf{Right}: Size of  $\frac{1}{M} \sum_{i=1}^M \left|\sum_{j=1}^N \phi_j(\x_i)\right|$ as $N$ increases with $M=N^2$. }
\label{QQT2}
\end{SCfigure}

\begin{figure}[h]
    \centering
    \includegraphics[width=14cm]{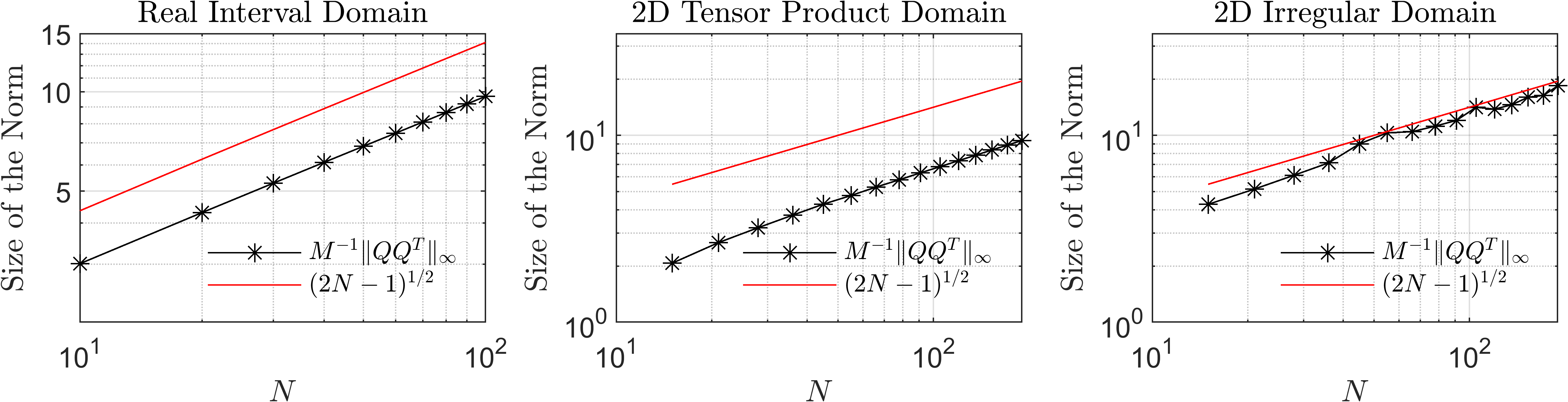}
    \caption{\small Size of  $\frac{1}{M}\|\Q  \Q^*\|_{\infty}$ in different domains. The left two plots are generated with $M = N^2$ equispaced sample points in $[-1, 4]$ and $[-1, 4] \times [-1, 6]$, respectively. The right plot is generated with $M =N^2 \log N$ random sample points on an elliptical domain (Domain $4$ in Figure \ref{domain}). }
    \label{QQT}
\end{figure}
}

\begin{remark}\textbf{(Relationship to Lebesgue Constant)} The bound \eqref{kate bound} in Theorem \ref{thm 4.3} is analogous to the bound in~\cite{MVBleast--squares} involving  the Lebesgue Constant $\Lambda_{\boldsymbol{X}}$, 
\begin{equation}
\tag{Lebesgue Constant Bound}
\|f - \mathcal{L}(f)\|_{\Omega}   \le (1+\Lambda_{\boldsymbol{X}}) \|f- p^*\|_{\Omega}
    \label{Lebesgue Constant bound}
\end{equation}
where the Lebesgue constant $\Lambda_{\boldsymbol{X}}$ is defined as the norm of the linear operator $\mathcal{L}$~\cite{ MVBleast--squares, MVBinterpolation}, 
\begin{align*}
\Lambda_{\boldsymbol{X}} := \min \{\check{c} > 0: \|\mathcal{L}(f)\|_{\Omega}\le \check{c} \|f\|_{\Omega}, \forall f\in \mathcal{C}(\Omega) \}
\end{align*}
where $\mathcal{C}(\Omega) \}$ represents bounded and continuous functions. 
That said, the relationship between \eqref{kate bound} and \eqref{Lebesgue Constant bound} appears to be unknown in the literature. To compare these two bounds, we plot the constant $\frac{2}{M} \|\Q  \Q^*\|_{\infty}$ which is in  \eqref{kate bound} against $\Lambda_{\X}$ which is in \eqref{Lebesgue Constant bound} in Figure  \ref{Leb smooth}. From the plot, we observe that these two bounds are of a similar order of magnitude same order of magnitude in the domains that we tested. 

\sidecaptionvpos{figure}{c}
\begin{SCfigure}[50][ht]
   \centering
    \includegraphics[width=8cm]{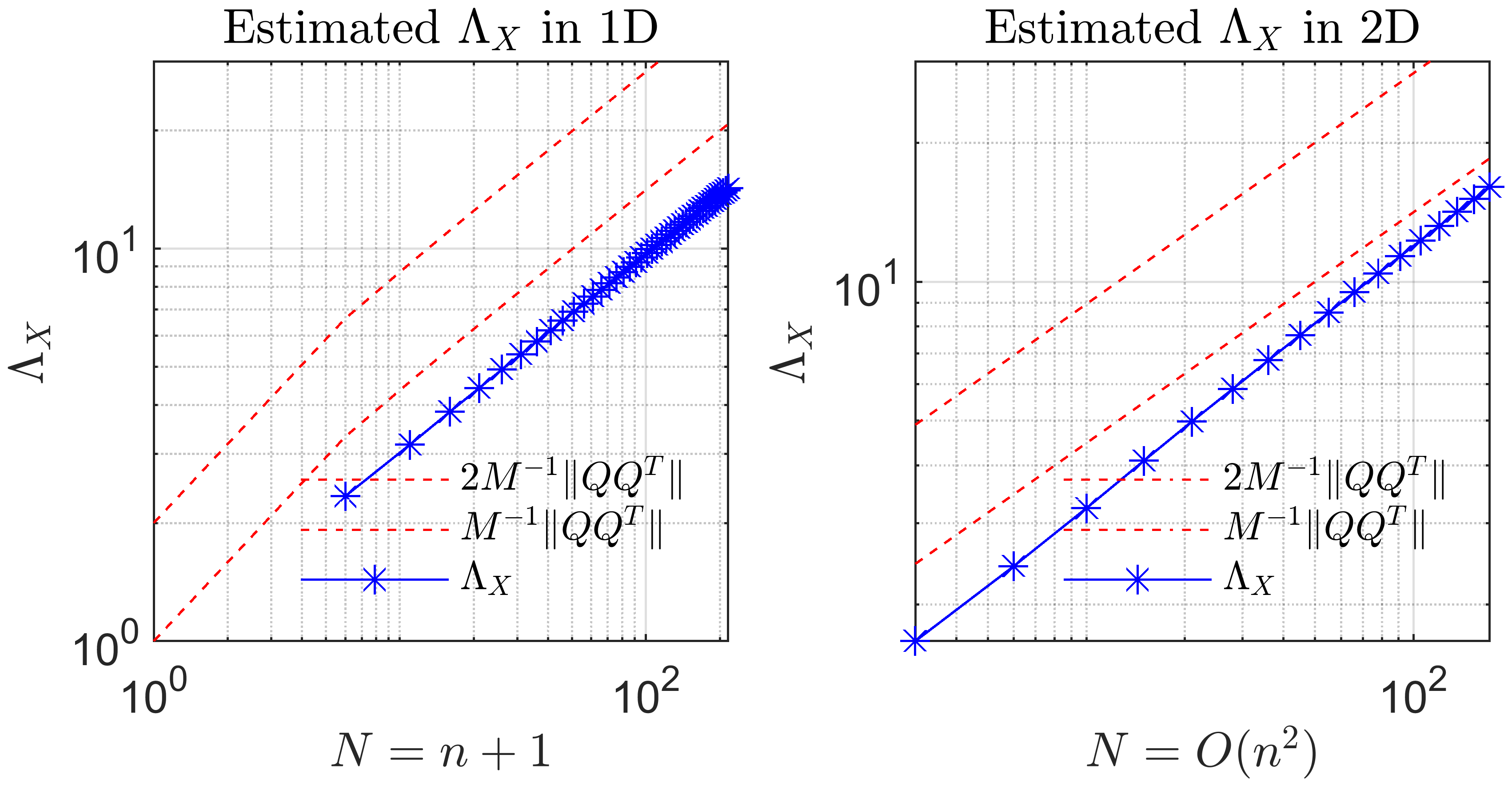}
\caption{\small  {The Lebesgue constant is estimated as $\Lambda_{\boldsymbol{X}} \approx \frac{1}{M} \|\boldsymbol{UQ}^{\ast}\|_{\infty}$, where $\boldsymbol{U} \in \R^{K \times N}$ is the matrix with entries $\{\phi_j(\boldsymbol{y}_i)\}_{1 \le j \le N, 1 \le i \le K} $, and the set of evaluation points $\boldsymbol{Y} := \{\boldsymbol{y}_1, \boldsymbol{y}_2, \dotsc, \boldsymbol{y}_K\} \subset \Omega$ with $K = 10M$. $\Lambda_{\boldsymbol{X}}$ is independent of the approximated function.}}
\label{Leb smooth}
\end{SCfigure}
\end{remark}

\subsubsection{Convergence Rate}
\label{sec Convergence Rate}

Following from Theorem \ref{thm 4.3}, to understand how $\|f-\mathcal{L}(f)\|_{\Omega}$ changes with the {order of the index} $n$, we need to establish a bound for $\|f-p^*\|_{\Omega}$ as $n$ increases. The bound is provided in Theorem \ref{Jackson thm}, adapted from \cite[Thm 1]{bagby2002multivariate}.

\begin{theorem}
\label{Jackson thm}
Let $f$ be a function of compact support on $\R^d$ and $\Omega $ be a compact subset of $\R^d$ which contains the support of $f$. 
{Let $f \in \mathcal{C}^k(\Omega)$ where $ \mathcal{C}^k(\Omega) = \{f: D^{\gamma} f \in \mathcal(\Omega), \forall \gamma: \|\gamma\|_1 \le k\}$ is the set of functions that have continuous derivatives up to total order $k$.}   
Then, 
$$
   \|f-p^*\|_{\Omega} \le C_3 n^{-k}
$$
where $p^*$ is the best polynomial approximation in ${\P^T_{d,n}}$. 
$C_3$ is a positive constant depending only on $k, D_k$ and the diameter of $\Omega $. 
\end{theorem}

\begin{remark}
Theorem \ref{Jackson thm} is the sub-case for \cite[Thm 1]{bagby2002multivariate} where we assume the multi-index for derivative used is $\boldsymbol{\gamma} = 0$. We also assumed that the derivatives of $f$ are bounded, therefore $\omega_{f, k}\big(\frac{1}{n} \big)$ in  \cite[Thm 1]{bagby2002multivariate} is bounded by, 
$\omega_{f, k}\big(\frac{1}{n} \big) := \sup_{{\|\boldsymbol{\gamma} \|_1=k}} \big( \sup_{\|\x-\y\| \le \frac{1}{n}} \|D^{\boldsymbol{\gamma}} f(\x) - D^{\boldsymbol{\gamma}} f(\y)\| \big) \le 2D_k$ where the norms are defined as Euclidean norms.
\end{remark}

Using Theorem \ref{thm 4.3} and \eqref{kate bound} in Theorem \ref{thm 3.3}, we can deduce the rate of convergence of the least squares approximation generated by the V+A algorithm (Algorithm \ref{Multi V+A Algo}).

\begin{theorem}
\label{thm convergence rate equispaced}
 Let $\Omega \subset \R^d$ with $d>1$ be a convex body or finite unions of convex bodies. Let $f:  \Omega \rightarrow {\R^d} \in \mathcal{C}^k(\Omega)$. Construct a set of equispaced sample points as described in Remark \ref{sample method}, ${\boldsymbol{X}} = \big\{ \x_i \big\}_{1\le i \le M} \subset \Omega$ with  $M = \O(N^2)$. Apply Algorithm \ref{Multi V+A Algo} using the sample points $\X$, the error of the least squares approximation satisfies
\begin{eqnarray}
     \|f - \mathcal{L}(f)\|_{\Omega} \le
      \bigg(1+\O(N)\bigg) {O}(n^{-k}) = \O(n^{d-k}) = \O(N^{1-\frac{k}{d}})
     \label{convergence bound}
\end{eqnarray}
where  {$\mathcal{L}(f) \in  \P^T_{d,n}$, $n$ is the order of the indices} and $N$ is the total degree of freedom.
\end{theorem}

\begin{remark} \textbf{(Extension to Random Sample Points)}
With $\O(N^2\log N)$ random sample points chosen as described in Theorem \ref{Ramdon AM}, Theorem \ref{thm 4.3} and Theorem \ref{thm convergence rate equispaced} hold with probability at least $1-M^{-\tilde{c}}$ for all multivariate polynomials in the polynomial space.   
\end{remark}

\begin{remark}\textbf{(Generalization to Domains with Markov Properties)} As we can see in the proof of Theorem \ref{thm 3.3}, the convexity of the domain is only used to deduce \eqref{markov} in Theorem \ref{Theorem markov}. Therefore,  Theorems \ref{thm 3.3}, \ref{Ramdon AM}, \ref{thm 4.3} and \ref{thm convergence rate equispaced} can be generalized to any domains satisfying \eqref{markov}. 
\label{remark Generalization to more domains}
\end{remark}

\begin{example}\textbf{(Convergence Rate for Functions with Different Smoothness)}
We test the least squares approximation using the V+A algorithm for functions of specific smoothness in 1D and 2D. {Note that the index set for the 2D example is the total degree index set.} For smoothness $k = 0,1,2, \infty$ and dimension $d=1,2$, let 
\begin{equation}
 f_{k}(\x ) :=
    \sum_{r=1}^d | x_{(r)}|^{2k+1} \text{ where } \x = (x_{(1)}, \dotsc, x_{(d)})\in [-1,1]^d.
   \label{fnu}
\end{equation}
We also set $f_{\infty}(\x ):= \sum_{r=1}^d  \sin[\exp({x_{(r)}})\cos(x_{(r)})]$. Clearly, $f_{k} \in \mathcal{C}^{k}([-1,1]^d)$ for $k = 0, 1, 2, \infty$.

For fixed $n$, $d$, and $\boldsymbol{X}$, the term $\frac{1}{M}\|\boldsymbol{QQ}^T\|$ is the same for all functions. Thus, the improvement in convergence rates of the smoother functions comes from $\|f-p^*\|_{\Omega}$. Since $\frac{1}{M}\|\boldsymbol{QQ}^T\|_{\infty} \sim \O(\sqrt{N})$ for tensor-product domains,  the least-squares approximations of $f_1$ and $f_2$ converge {like} $\O(n^{-3/2})$ and $\O(n^{-7/2})$ in 1D and $\O(n^{-1})$ and $\O(n^{-3})$ in 2D, respectively. The convergence rates of the four bivariate functions are approximately the square root of the convergence rates of the univariate counterparts. This is expected as we only have $\sim \sqrt{N}$ degrees of freedom in the $x_{(1)}$ or the $x_{(2)}$ directions for the bivariate approximation. 

\sidecaptionvpos{figure}{c}
\begin{SCfigure}[50][ht]
   \centering
    \includegraphics[width=9cm]{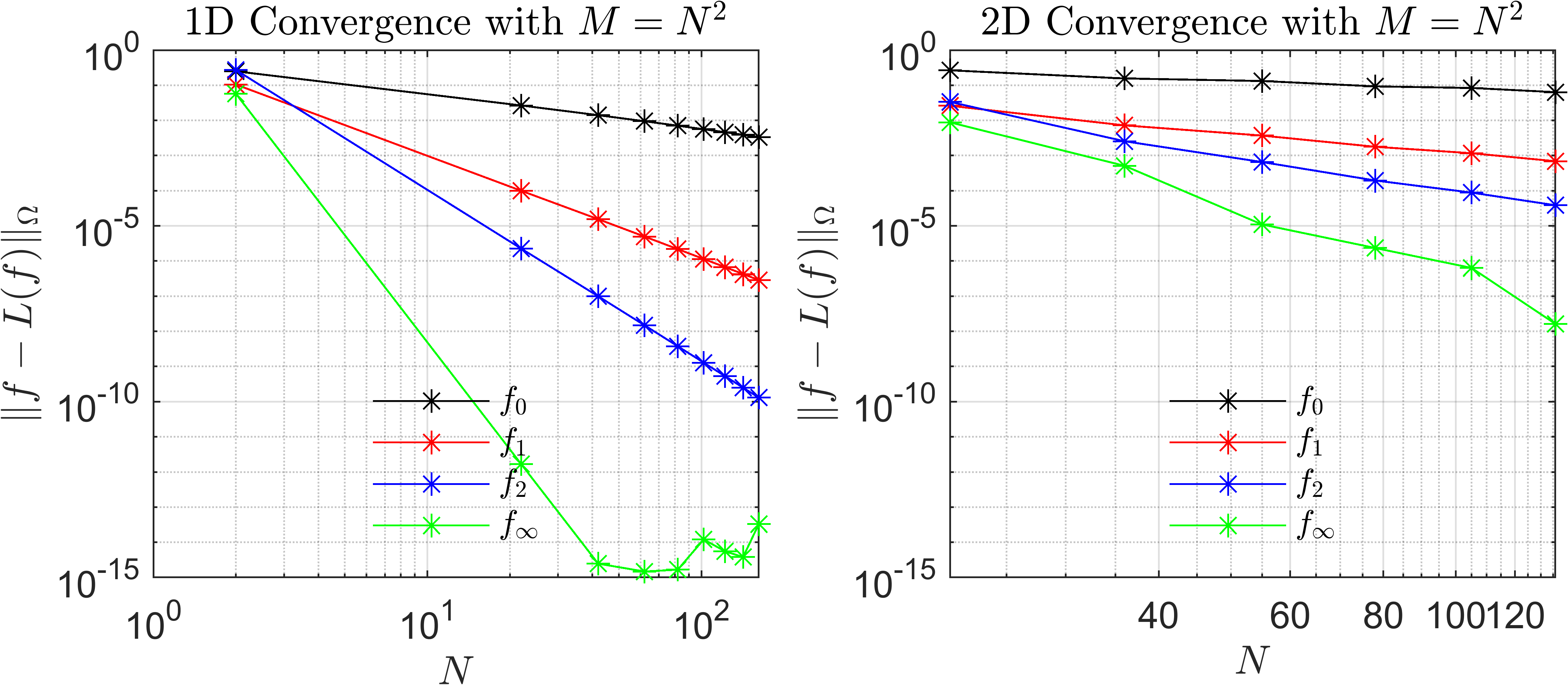}
    \caption{\small The set of sample points $\boldsymbol{X}=\{\x_i\}_{1\le i \le N^2}$ and evaluation points $\boldsymbol{Y}=\{\y_i\}_{1\le i \le 10N^2}$ are chosen as equispaced points. The domain for approximation is $[-1, 1]$ in 1D and $[-1, 1]^2$ in 2D. }
    \label{function smooth}
\end{SCfigure}
\end{example}

\subsection{Related Work on Sample Complexity}
\label{sec Related Work on Sample Complexity}

In this subsection, we discuss the difference in construction between the sample complexity of the least squares approximation using the V+A Algorithm and the sample complexity of polynomial frame approximation.

There are many existing proofs for the sample complexity of polynomial approximations~\cite{frame2, Adcock2019,  cohen, cohenweight,  migdisorth}. 
 For instance, in \cite{cohen, cohenweight}, the proof by Cohen et al. first constructs a $\mathcal{L}^{2}$-continuous orthogonal basis $\{J_1, J_2, \dotsc, J_N\}$, such that $\int_{\Omega} J_i(\x) J_j(\x) dx = \delta_{i, j}. $ Then, the solution of their least-squares problem can be computed by solving the $M \times M$ Gram matrix system, $\Q_1^*\Q_1 d_1 =f$ where the $(i, j)$th entries of $\Q_1$ is $J_j(\x_i)$ for $1 \le j \le N$ and $1 \le i \le M$.  
The purpose of their analysis is to find how many sample points we need such that the discrete measure inherits the orthogonality of $\{J_i\}_{1 \le i \le N}$. Namely, how to choose $M$ such that
\begin{equation*}
   \sum_{i=1}^M{J_j(\x_i)J_k(\x_i)} \approx \delta_{j,k} \qquad\text{and}\qquad \mathbb{E}(\Q_1^*\Q_1) \approx  \boldsymbol{I}_n.
\end{equation*}
Using the exponentially decreasing bounds on tail distributions  $\mathbb{P}(\|\Q_1^*\Q_1 - I\| \ge\frac{1}{2})$,  Cohen et al. proved that $M = \O(N^2 \log N)$ random sample points is enough to obtain a stable least-squares approximation.


In~\cite{Adcock2019}, Adcock and Huybrechs give different proof for the sample complexity of frame approximation on irregular domains. The key step of the proof uses the Nikolskii inequality~\cite{migliorati2015multivariate} on the bounding tensor-product domain
\begin{equation}
    \|p\|_{\Omega_{cube}} \le \mathcal{N}  \|p\|_{\mathcal{L}^{2}(\Omega_{cube})}
\end{equation}
where $\mathcal{N}$ is a constant that depends on the domain and the sample complexity. 
They proved that the least squares are near-optimal if the domain satisfies the $\lambda$-rectangle property. Namely, the bounding domain can be written as a (possibly overlapping and uncountable) union of hyperrectangles $\Omega_{cube}$ where $\lambda$ is the value of $\text{Volume}(\Omega)/ \text{Volume}(\Omega_{cube})$. The sample complexity required scales by $N^2\lambda^{-1}$ where $N$ is the total degree of freedom. {Similar to the Markov constant $\M(\Omega)$ introduced in \eqref{markov}, the parameter $\lambda$ in \cite{Adcock2019} is also independent of the bounding domain and is a constant that depends on $\Omega$. }

Although sample complexity results obtained for the least squares approximation using the frame approximation and using the V+A method are similar, the idea and the construction of the proof are different. Firstly, in polynomial frame approximation, the proof starts with a continuous orthogonal polynomial and explores the distribution of sample points such that the discrete least-squares matrix is approximately orthogonal (i.e., $\Q_1^*  \Q_1\approx I_n$). By construction in V+A, we start with a discrete orthogonal basis (i.e., $\Q^* \Q  = I_n$) and examine the behaviours of these discrete orthogonal polynomials in the domain. Another difference is that these two approaches build the bound using different norm spaces in different domains. In V+A, we use the suprema of discrete orthogonal polynomials, $\|p\|_{\boldsymbol{X}}$, while in the polynomial frame approach, the Nikolskii inequality uses the $\mathcal{L}_2$ norm, $\|p\|_{\mathcal{L}^2(\Omega_{cube})}$. 
{The analysis for the least squares approximation using V+A method applies to the maximum degree or total degree polynomial spaces. In contrast, the analysis in \cite{Adcock2019} provides a more general result regarding polynomial spaces, extending to any polynomial space defined by a lower set. {This includes maximum degree, total degree, and hyperbolic cross polynomial spaces. Current sample complexity analysis (derived via Lemma \eqref{lemma markov}) results in a scaling of $M = \O\left(\frac{N}{\log^{d-1}(N)}\right)^{2d}$ for the hyperbolic cross index set. This scaling is less favorable compared to the $M = \O(N^2)$ scaling reported in \cite{Adcock2019}, although the domain assumptions differ, and the error bound in that case is measured in the $L_2$ norm rather than the $L_\infty$ norm. Further investigation into improving the sample complexity analysis for hyperbolic cross polynomial spaces remains an open area of research.}
In terms of the domain, the analysis in \cite{Adcock2019} requires the bounding domain can be written as a (possibly overlapping and uncountable) union of hyperrectangles $\Omega_{cube}$, whereas the analysis for the least squares approximation using V+A method holds for all domain satisfying \eqref{markov} (Remark \ref{remark Generalization to more domains}). }

More recently, Adcock and Shadrin \cite{adcock2023fast} demonstrated that with linear oversampling of equispaced sample points and polynomial extensions over an extended interval, the equispaced samples become near-optimal for polynomial frame approximation in 1D. They propose a well-conditioned method that ensures exponential decay of the error, reaching a finite, user-controlled tolerance.
In the following section, we will explore the use of the weighted least-squares algorithm and the multivariate V+A algorithm to give a well-conditioned and near-optimal polynomial approximation. 


\section{Near-Optimal Sampling Strategy for V+A}
\label{section Weighted LSA}

In this section, we propose a new variant of the weighted least-squares algorithm that uses the multivariate V+A to create the discrete orthogonal basis. We refer to our V+A weighted least-squares algorithm as \texttt{VA+Weight}.  

In~\cite{ADCOCK, adcock2022towards, cohenweight, migdisorth}, the authors gave comprehensive analyses on this weighted sampling strategy and proved that only $M=\O(N\log N)$ sample points are needed for a well-conditioned and accurate approximation. {A recent work~\cite{adcock2024optimal} provides a comprehensive review on optimal sampling for (weighted) least-squares approximation in arbitrary linear spaces, introducing the Christoffel function as a key quantity in the analysis of (weighted) least-squares approximation from random samples.} In \texttt{VA+Weight}, we use the same weighting measure as in~\cite{ADCOCK}.
But instead of creating the discrete orthogonal basis with QR factorization, we use V+A as the orthogonalization strategy for the Vandermonde basis. Since the Vandermonde matrix $\A $ is usually highly ill-conditioned, and computing its $Q$ factor incurs errors proportional to the condition number of $\A$~\cite[Ch.~19]{Thm19.4}. Even though the QR factorization is a stable orthogonalization technique, the discrete orthogonal basis generated from $\A $ could still be inaccurate. We refer to the weighted least-squares approximation using the QR factorization as \texttt{QR+Weight}. 
Along with multiple numerical examples, we illustrate that
\texttt{VA+Weight} gives more accurate approximations than \texttt{QR+Weight} for high-degree polynomial approximations.  Due to the reduced sample density, \texttt{VA+Weight} also gives a lower online computational cost than the unweighted V+A least-squares method. Moreover, \texttt{VA+Weight} acts as a practical tool for selecting the near-optimal distribution of sample points in a high-dimensional irregular domain. 

The section is arranged as follows. In Section \ref{Section setup},  we explain the algorithm and the numerical setup for the weighted least-squares approximation. We provide proof of the stability of the weighting measure for sample points. In Section \ref{Section Numerical VA Weight}, we give numerical examples to compare the \texttt{VA+Weight} and \texttt{QR+Weight} algorithms.

\subsection{Weighted Least-Squares Approximation}
\label{Section setup}

The \texttt{VA+Weight} algorithm is presented in Algorithm \ref{Algo weighted least-square} and the remarks for the algorithm are given in Remark \ref{remark1-3}. Algorithm \ref{Algo weighted least-square} is a V+A variant of Method 1 in~\cite{ADCOCK}.

\begin{algorithm}[!ht]\small
\textbf{Input: }A compact and bounded domain $\Omega$ {satisfying \eqref{markov} and the dimension of the domain $d$.} Bounded and continuous $f \in \mathcal{C}(\Omega)$; 
\\${\P^T_{d,n}}$ with {Vandermonde basis} $\hat{\psi} =[\hat{\psi}_1(\x ), \dotsc, \hat{\psi}_N(\x )]^T$; 
\\Number of sample points {$M = \O(N^2\log N)$ and $\hat{M}  = \O(N\log N)$ such that $M\ge \hat{M} \ge N$.}
\\ \textbf{Output}: The coefficients $\hat{d}\in \R^{N}$ of the polynomial approximant. 
\smallbreak
\textbf{Step 1:} Draw $M$ random sample points $\boldsymbol{X} = \{\x_i\}_{1\le i \le M} \stackrel{i.i.d.}{\sim} \rho$ {where $\rho$ represents the uniform measure}. Compute the function values at $\boldsymbol{X}$ i.e $\boldsymbol{\tilde{f}} \in  \R^M$.
\\
\textbf{Step 2:} Construct $M \times N$ Vandermonde matrix $\A $ with the $(i, j)$th entry as $\hat{\psi}_j(\boldsymbol{y}_i)$. If $\rank(\A ) =N$, go to Step 3, else go back to Step 1. 
\\
\textbf{Step 3:} Apply Algorithm \ref{Multi V+A Algo} to $\A $ to generate $\Q  \in \R^{M \times N}, \boldsymbol{H} \in \R^{N \times N}$ such that $\mbox{diag}(\boldsymbol{X})\Q =\Q \boldsymbol{H}+h_{N+1,N}q_{N+1}e_{N}$.
\\
\textbf{Step 4:} Define a probability distribution $\pi = \{\pi_i\}_{1\le i \le M}$ on $\{1, \dotsc, M\}$, such that
$
    \pi_i = \frac{1}{\|\Q \|_{F}^2} \sum_{j=1}^N |\Q_{i,j}|^2,$ for $i=1, \dotsc, M,
$
where {$\|\Q \|_F :=  \sum_{j=1}^M\sum_{j=1}^N |\Q_{i,j}|^2$ is the Frobenius norm} and $\Q_{i,j}$ are the $(i, j)$th entry of $\Q$. 
\\
\textbf{Step 5:} Draw $\hat{M}$ integers $\{k_1, \dotsc, k_{\hat{M}}\}$ independently from $\pi$. Define $\hat{\Q }$ and $\boldsymbol{\hat{f}}$ as the corresponding scaled rows of $\Q $ and $\boldsymbol{\tilde{f}}$, such that the point-wise entries are
\begin{equation*}
    \hat{\Q }_{i, j}:= \frac{\Q_{k_i,j}}{\sqrt{\hat{M}M\pi_{k_i}}},  \qquad  \boldsymbol{\hat{f}}_{i}:= \frac{f(\x_{k_i})}{\sqrt{\hat{M}M\pi_{k_i}}}, \qquad i=1, \dotsc, \hat{M}, \quad j=1, \dotsc, N. 
\end{equation*}
\\
\textbf{Step 6:} $\boldsymbol{\hat{d}}= \boldsymbol{\hat{Q}} \backslash \boldsymbol{\hat{f}}. $ Solve the least-squares problem by the MATLAB backslash command. Approximate the value of $f$ using the evaluation algorithm in Algorithm \ref{Multi eval Algo} and give the output. 
\caption{\small The Weighted Least Squares Approximation with V+A Method (\texttt{VA+Weight})}
\label{Algo weighted least-square}
\end{algorithm}

\begin{remark}
Details for implementing {Algorithm \ref{Algo weighted least-square}} are as follows. 
\begin{enumerate}
\item  {$\Omega \in \R^d$ could be a convex body or finite unions of convex bodies or other domains that satisfy \eqref{markov} (See Remark \ref{remark Generalization to more domains})}
\item {In our numerical implementation, unless otherwise stated,  we choose $M = N^2\log N$ and $\hat{M}  = N\log N$ such that $M\ge \hat{M} \ge N$}.
\item  We assume that it is possible to draw samples from the {uniform} measure $\rho$ in Step 1.  We use the uniform rejection sampling method to draw samples. We end the rejection sampler once we have enough sample points. 
\item  To ensure that $\spn\{\phi_1, \dotsc,\phi_N\}={\P^T_{d,n}}$  
in Step 2 of the algorithm, if $\A $ is rank {deficient}, 
we add additional sample points until $\rank(\A ) = N$. 

\item The construction of $\Q $ uses Algorithm \ref{Multi V+A Algo}. Numerically, the orthogonalization algorithm is subject to a loss of orthogonality due to numerical cancellation. The numerically constructed discrete orthogonal basis is said to be $\epsilon_{m}$-orthonormal for $\epsilon_{m}>0$, namely, $\|\Q^*\Q -\boldsymbol{I}\|^2_F=\sum_{j,k=1}^N|\langle \phi_j, \phi_k\rangle_{M} - \delta_{j, k}|^2 \le \epsilon_{m}^2$ where $\langle  \phi_j, \phi_k \rangle_M =  \frac{1}{M}\sum_{i=1}^M \phi_j(\x_i) \phi_k(\x_i)$.   For any bounded domain $\Omega$, we define \textit{the infinity $\Omega$-norm} of a bounded function $g: \Omega \rightarrow \R$ as $\|g\|_{\Omega}=\sup_{\x \in\Omega}|g(\x )|$. 
In the multivariate V+A algorithm, we execute CGS twice for the orthogonalization, which gives a bound $\epsilon_m \sim \O(MN^{3/2}) \boldsymbol{u}$, where $\boldsymbol{u}$ is the unit roundoff~\cite[Thm 19.4]{Thm19.4} and \cite[Thm 2]{gstwice}. 
\end{enumerate}
\label{remark1-3}
\end{remark}

In Algorithm \ref{Algo weighted least-square}, for each sample point $\x_i$, $\sum_{j=1}^N |\Q_{i,j}|^2$ represents the sum of the absolute value of discrete orthogonal polynomials at $\x_i$. The weighting measure $\pi$ can be interpreted as choosing the sample points that maximize the absolute sum of the discrete orthogonal polynomials at the sample points. Heuristically, this weighting measure makes sense as the supremum usually happens near the boundaries and corners of the domain. Many sampling measures, such as Chebyshev points in real intervals and Padua points~\cite{bos2006bivariate, caliari2005bivariate} in higher dimensional tensor-product domains have highlighted the importance of sample points near the boundary and corners. 

$\hat{M}$ is the number of points we selected from a total of $M$ sample points. The main question that we analyze in this section is how large $\hat{M}$ needs to be chosen in relation to $N$ to ensure a near-optimal approximation. As we will prove later, with the probability distribution defined in Step 4, a log-linear scaling of $\hat{M} = \O(N\log N)$ is enough for a well-conditioned, near-optimal weighted least-squares approximation (provided that $M$ is large enough). The reduced sample complexity in  Algorithm \ref{Algo weighted least-square} gives a reduced online computational cost. The computational cost for Algorithm \ref{Algo weighted least-square} is dominated by the cost of Step 3 and Step 6, which are $\O(MN^2)$ flops and $\O(\hat{M}N^2)$ flops, respectively. Since we need $M = \O(N^2\log N)$ random sample points to generate a randomized admissible mesh for convex domains or unions of convex domains in $\R^d$, Algorithm \ref{Algo weighted least-square} gives an online computational cost of $\O(N^3\log N)$ while the unweighted least-squares method described in Section \ref{section Multivariate VA} requires $\O(N^4\log N)$.

Algorithm \ref{Algo weighted least-square} can be interpreted as a weighted least-squares system with V+A orthogonalization.
Using the weight matrix 
$
    \boldsymbol{\hat{W}} :
= \frac{1}{\sqrt{\hat{M}}}\diag\left(\frac{1}{\sqrt{M \pi_{k_i}}}\right)_{1 \le i \le \hat{M}},
$
the weighted least-squares problem can be written as
\begin{eqnarray} 
\boldsymbol{\hat{d}} = \argmin_{\boldsymbol{{d}} \in \R^N} \|\boldsymbol{\hat{Q}d}-\boldsymbol{\hat{f}}\|_{2}:= \argmin_{\boldsymbol{{d}} \in \R^N} \|\boldsymbol{\hat{W}{Q_S}d}-\boldsymbol{{f_S}}\|_{2} 
    \label{weighted ls}
\end{eqnarray} where 
$\boldsymbol{Q_S} \in \R^{\hat{M}\times N}$ and $\boldsymbol{f_S} \in \R^{\hat{M}}$ are the matrix and the vector formed with $\{k_1, \dots, k_{\hat{M}}\}$ selected rows of $\Q $ and $\boldsymbol{\tilde{f}}$ in the unweighted system. $\boldsymbol{\hat{d}}: =[\hat{d}_1, \dotsc, \hat{d}_N]^T$ is the vector of coefficients for the weighted least-squares estimator such that $\mathcal{L}(f)_W:= \sum_{i=1}^N \hat{d}_i \phi_i$. Note that $\phi_i$ is defined as before, namely the discrete orthogonal polynomials generated by $M$ sample points. The estimator $ \mathcal{L}(f)_W$ is solved using normal equations, such that
\begin{equation}
 \boldsymbol{d} = \boldsymbol{G}^{-1} (\boldsymbol{\hat{Q}^*\hat{f}}). 
 \label{gram}
\end{equation}
$\boldsymbol{G}:=\boldsymbol{\hat{Q}^*\hat{Q}} \in \R^{N\times N}$ is the reduced Gram matrix. 

Since the weighted least-squares estimator is found by solving system \eqref{gram} and by taking the inverse of the matrix $\boldsymbol{G}$, {for stability and convergence} we need to ensure that the Gram matrix $\boldsymbol{G}$ is well-conditioned. Also, we investigate how much  $\boldsymbol{G}=\boldsymbol{\hat{Q}^*\hat{Q}} $ deviates from $\boldsymbol{{Q}^*{Q}}$. In other words, we want to understand whether the discrete orthogonal basis at the selected sample points $\boldsymbol{\hat{X}}$ acts as a good approximation to the discrete orthogonal basis at the full sample points $\boldsymbol{{X}}$. The following theorem adapted from \cite[Thm 3]{migdisorth} establishes these links. 

\begin{theorem}\textbf{(Well-Conditioning of Reduced Gram Matrix)}
Let $\Omega \in \R^d$, consider finding a weighted least-squares approximation in the polynomial space ${\P^T_{d,n}}$ using Algorithm \ref{Algo weighted least-square}. Let $\boldsymbol{X} = \{\x_i\}_{1\le i \le M} \stackrel{i.i.d.}{\sim} \rho$ and generate the discrete orthogonal basis using Step 3 of Algorithm \ref{Algo weighted least-square}. The numerically constructed discrete orthogonal basis is $\epsilon_{m}$-orthonormal for $\epsilon_{m}>0$, such that $\|\Q^*\Q -\boldsymbol{I}\|^2_F=\sum_{j,k=1}^N|\langle \phi_j, \phi_k\rangle_{M} - \delta_{j, k}|^2 \le \epsilon_{m}^2$.  For $\hat{\alpha} \in (0, \frac{1}{2}), \epsilon_m \in (0,1), \hat{\delta} \in (0,1-\epsilon_m)$,  and $n \ge 1$, if the following conditions hold,
\begin{flalign*}
\text{i) } &\hat{M}\ge \frac{4N(1+\epsilon_m)}{\hat{\delta}^2} \log (\frac{2N}{\hat{\alpha}}), &&\\
\text{ii) }&\boldsymbol{\hat{X}} = \{\boldsymbol{\hat{x}}_i\}_{1\le i \le \hat{M}} \stackrel{i.i.d.}{\sim} \pi \text{ where $\pi$ is defined as in Step 4 of Algorithm \ref{Algo weighted least-square},}&& 
\end{flalign*}
then, the matrix $\boldsymbol{G}$ satisfies $\mathbb{P} (\|\boldsymbol{G}-\boldsymbol{I}\|_2\ge \hat{\delta}+\epsilon_m) \le \hat{\alpha}$ where $\boldsymbol{I}$ is the $N \times N $ identity matrix. 
\label{Thm weighted convergence}
\end{theorem}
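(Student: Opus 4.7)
The strategy is to express $\boldsymbol{G}$ as an empirical mean of independent rank-one random matrices and apply a matrix Chernoff inequality. Let $\boldsymbol{q}_l \in \R^N$ denote the $l$th row of $\boldsymbol{Q}$ viewed as a column vector, and set $Y_i := \boldsymbol{q}_{k_i}/\sqrt{\hat{M}M\pi_{k_i}}$, so that $\boldsymbol{G} = \sum_{i=1}^{\hat{M}} Y_i Y_i^*$ is a sum of i.i.d.\ positive semi-definite random matrices. Using $\mathbb{P}(k_i = l) = \pi_l$, a direct calculation gives
\[
\mathbb{E}[Y_iY_i^*] = \sum_{l=1}^M \pi_l\,\frac{\boldsymbol{q}_l \boldsymbol{q}_l^*}{\hat{M}M\pi_l} = \frac{1}{\hat{M}M}\boldsymbol{Q}^*\boldsymbol{Q},
\]
so $\mathbb{E}[\boldsymbol{G}] = \tfrac{1}{M}\boldsymbol{Q}^*\boldsymbol{Q}$. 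By the $\epsilon_m$-orthogonality assumption, $\|\mathbb{E}[\boldsymbol{G}] - \boldsymbol{I}\|_2 \le \|\mathbb{E}[\boldsymbol{G}] - \boldsymbol{I}\|_F \le \epsilon_m$. The triangle inequality then reduces the theorem to the purely stochastic bound $\|\boldsymbol{G} - \mathbb{E}[\boldsymbol{G}]\|_2 \le \hat{\delta}$ with probability at least $1-\hat{\alpha}$.

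The cornerstone of the argument is a uniform bound on each summand that exploits the specific choice of sampling measure $\pi$. Because $\pi_l$ is proportional to $\|\boldsymbol{q}_l\|_2^2$ (Step 4 of Algorithm \ref{Algo weighted least-square}),
\[
\|Y_i Y_i^*\|_2 = \|Y_i\|_2^2 = \frac{\|\boldsymbol{q}_{k_i}\|_2^2}{\hat{M}M\pi_{k_i}} = \frac{\|\boldsymbol{Q}\|_F^2}{\hat{M}M} \le \frac{N(1+\epsilon_m)}{\hat{M}},
\]
where the final inequality uses $\|\boldsymbol{Q}\|_F^2 = \mathrm{tr}(\boldsymbol{Q}^*\boldsymbol{Q}) \le MN(1+\epsilon_m)$. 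This cancellation is the quantitative reason the weighted scheme achieves log-linear sample complexity in $N$: without it the per-summand bound would depend on the pointwise maxima of the $\phi_j$, which can grow with $N$ and degrade the final scaling.

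Finally, I would invoke a symmetric matrix Chernoff inequality for i.i.d.\ PSD summands, with per-term spectral bound $R = N(1+\epsilon_m)/\hat{M}$ and mean-eigenvalue bound $\lambda_{\max}(\mathbb{E}[\boldsymbol{G}]) \le 1+\epsilon_m$. After a union bound over the upper and lower deviations this produces an estimate of the form
\[
\mathbb{P}\bigl(\|\boldsymbol{G}-\mathbb{E}[\boldsymbol{G}]\|_2 \ge \hat{\delta}\bigr) \le 2N\exp\!\left(-\frac{\hat{M}\hat{\delta}^2}{4N(1+\epsilon_m)}\right),
\]
and demanding that the right-hand side be at most $\hat{\alpha}$ reproduces precisely the sample-complexity condition $\hat{M} \ge \tfrac{4N(1+\epsilon_m)}{\hat{\delta}^2}\log(2N/\hat{\alpha})$ in the theorem. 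The main obstacle is bookkeeping: identifying the matrix-Chernoff variant whose constants match the statement exactly (in particular the factor $4$ in the denominator, which comes from restricting to the small-deviation regime $\hat{\delta} \in (0,\tfrac{1}{2})$ and the log-moment-generating-function bound used for Bernoulli-type matrix sums). The two nontrivial structural ingredients---unbiasedness of $\mathbb{E}[\boldsymbol{G}]$ up to $\epsilon_m$ and the uniform bound on $\|Y_i\|_2^2$---are exactly what the two calculations above supply.
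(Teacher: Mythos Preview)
Your proposal is correct and follows essentially the same route as the paper's proof sketch: both split $\boldsymbol{G}-\boldsymbol{I}$ into the deterministic bias $\mathbb{E}[\boldsymbol{G}]-\boldsymbol{I}$ (controlled by $\epsilon_m$-orthogonality, since $\mathbb{E}[\boldsymbol{G}]=\tfrac{1}{M}\boldsymbol{Q}^*\boldsymbol{Q}$) and the stochastic fluctuation $\boldsymbol{G}-\mathbb{E}[\boldsymbol{G}]$ (controlled by a matrix concentration inequality using the uniform summand bound $\|Y_iY_i^*\|_2\le N(1+\epsilon_m)/\hat{M}$ afforded by the leverage-score weights). The only cosmetic difference is that the paper names the concentration tool the matrix \emph{Bernstein} inequality rather than matrix Chernoff; for bounded PSD summands these yield the same tail form, and your closing remark about matching the constant $4$ correctly identifies where the bookkeeping lies.
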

We give a sketch of the proof for Theorem \ref{Thm weighted convergence}. We write $\mathbb{P} (\|\boldsymbol{G}-\boldsymbol{I}\|_2\ge \hat{\delta}+\epsilon_m)$ as
$$\mathbb{P} (\|\boldsymbol{G}-\boldsymbol{I}\|_2 < \hat{\delta}+\epsilon_m) \ge \underbrace{ \mathbb{P}(\{\|\mathbb{E}(\boldsymbol{G})-\boldsymbol{I}\|_2 < \epsilon_m  \})}_{:=P_1} \underbrace{\mathbb{P} ( \{\|\boldsymbol{G}-\mathbb{E}(\boldsymbol{G})\|_2 < \hat{\delta}\})}_{:=P_2}.$$
The first probability term $P_1$ is bounded using condition $(ii)$, the equality $\mathbb{E}(\boldsymbol{G}_{j,k}) =  \langle \phi_j, \phi_k \rangle_{M}$ and the $\epsilon_m$-orthogonality. The second probability term $P_2$ is bounded using the Bernstein Inequality and condition $(i)$ which gives a tail bound for sums of random matrices. A full proof of Theorem \ref{Thm weighted convergence} can be found in~\cite[Thm 3]{migdisorth}. 

Theorem \ref{Thm weighted convergence} not only ensures that $\boldsymbol{G}$ is well-conditioned with high probability but also guarantees that the weighted least-squares problem \eqref{gram} is stable with high probability. Under the conditions of Theorem \ref{Thm weighted convergence}, we have $1-\hat{\delta} -\epsilon_m \le \|\boldsymbol{G}\|_2 \le 1+\hat{\delta} +\epsilon_m$. Using that $\boldsymbol{v}^T\boldsymbol{G}\boldsymbol{v} = (\boldsymbol{\hat{Q}v})^T(\boldsymbol{\hat{Q}v})$ for all $\boldsymbol{v}\in \R^N$, it follows that
\begin{eqnarray}
    \|\boldsymbol{\hat{Q}}\|_2 =
    \|\boldsymbol{G}\|_2^{1/2}, \text{ and } \|\boldsymbol{G}^{-1}\|_2\|\boldsymbol{\hat{Q}}^T\|_2 \le \frac{\sqrt{1+\hat{\delta} +\epsilon_m}}{1-\hat{\delta} -\epsilon_m} =: C_{\hat{\delta}, \epsilon_m}.
\label{QG bound}
\end{eqnarray}
Thus, we arrive at the stability result $\|\boldsymbol{\hat{d}}\|_2 =  \|\boldsymbol{G}^{-1}\boldsymbol{\hat{Q}}\boldsymbol{\hat{f}}\|_2 \le  C_{\hat{\delta}, \epsilon_m} \|\boldsymbol{\hat{f}}\|_2.$ Numerical examples the change of condition number of $\boldsymbol{G}$ with respect to $N$ can be found in Figure \ref{condition of gram}.  {We illustrate, with a numerical example, the growth of the condition number of Gram Matrix (i.e., $\kappa_2(\boldsymbol{G})$) with respect to $N$ for different sampling densities $\hat{M}$. In Figure \ref{mathcalC}, we plot the condition number of $\boldsymbol{G}$ formed by the weighted sampling method in Step 5 of Algorithm \ref{Algo weighted least-square}. For each line in Figure \ref{mathcalC}, we choose $\hat{M}$ to be different functions of $N$. As illustrated by the plot, 
we need a sampling density of $\hat{M}=\O(N\log N)$ to prevent 
$\kappa_2(\boldsymbol{\hat{Q}})$ from growing exponentially.} 

\sidecaptionvpos{figure}{c}
\label{condition of gram}
\begin{SCfigure}[50][ht]
   \centering
    \includegraphics[width=9cm]{mathcal_C.jpg}
    \caption{\small The sample points in this plot are generated from Domain $2$ in Figure \ref{domain}. }
    \label{mathcalC}
\end{SCfigure}

To ensure the convergence of Algorithm \ref{Algo weighted least-square}, in addition to conditions $(i)$ and $(ii)$ in Theorem \ref{Thm weighted convergence}, we also require a sample density of $M=\O(N^2\log N)$ random sample points. This result is expected as we do need $M=\O(N^2\log N)$ random sample points to form discrete orthogonal polynomials that are well-bounded in the domain. There are a few papers that discuss the convergence for Algorithm \ref{Algo weighted least-square}, namely \cite[Thm 3.1-3.5]{ADCOCK}, \cite[Thm 6.6]{Adcock2019} and \cite[Thm 2]{migdisorth}. 

\subsection{Numerical Examples for Weighted V+A Algorithm}
\label{Section Numerical VA Weight}
In this subsection, we compare the \texttt{VA+Weight} algorithm with the \texttt{QR+Weight} algorithm. The two algorithms use the same weights, but the \texttt{QR+Weight} algorithm uses the QR {factorization of the ill-conditioned Vandermonde matrix} to create the discrete orthogonal basis. 

In Figure \ref{Sampling 1D}, we plot the numerical results of approximating a smooth function using \texttt{VA+Weight} and \texttt{QR+Weight} in a real interval. As shown in the left plot, both algorithms converge with $\hat{M}=\O(N\log N)$ number of weighted sample points. Before the ill-conditioning of $\A $ builds in, the two algorithms performed similarly as expected. Note that the error from {the two} approximations will not be the same as we selected the weighted sample points in a non-deterministic fashion (i.e., following the probability measure $\sigma$). However, when the condition number of $\A$ grows beyond the inverse of the machine epsilon for $N>60$, the \texttt{QR+Weight} approximation has an error stagnating at $10^{-5}$ as illustrated in the left plot of Figure \ref{Sampling 1D}. Although the QR factorization and the weighted sampling method {generate} a well-conditioned $\boldsymbol{\hat{Q}}$  
 (Figure \ref{Sampling 1D}), the columns of $\boldsymbol{\hat{Q}}$ do not approximate the orthogonal basis in the domain (Figure \ref{discrete orth qr}). Thus, accuracy is lost for large $N$.

\begin{figure}[!ht]
    \centering
    \includegraphics[width=14cm]{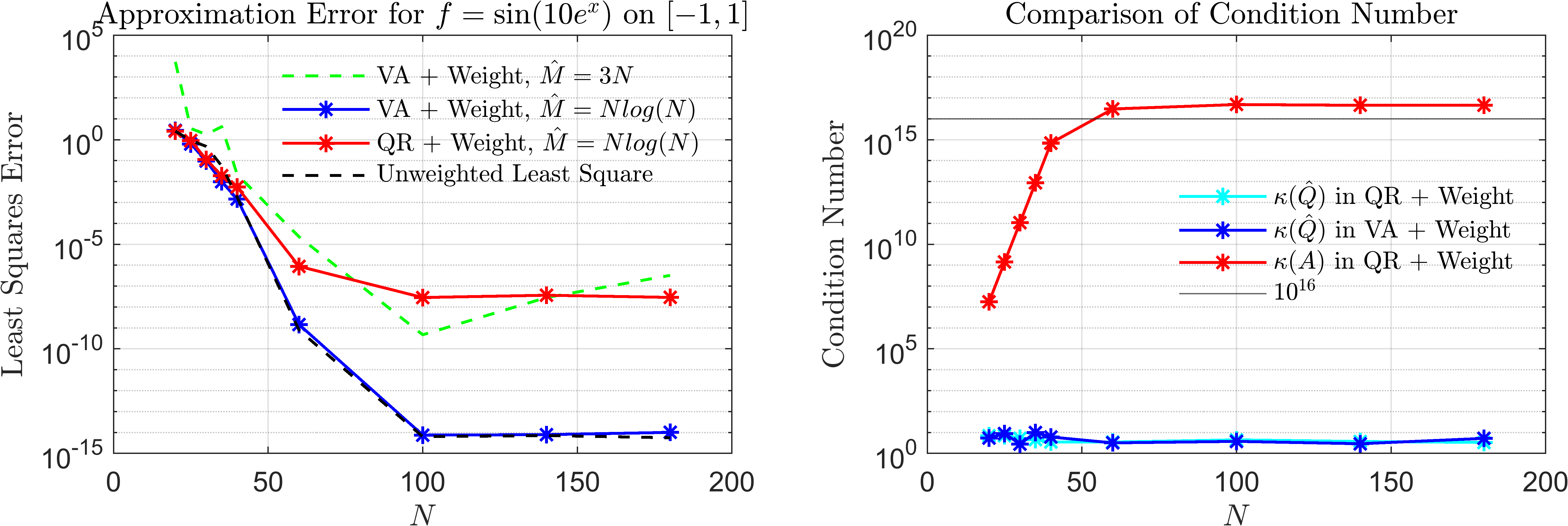}
    \caption{\small Comparison of \texttt{VA+Weight} and \texttt{QR+Weight} in a 1D domain. The sample points are chosen as $M=N^2 \log N$ random points.  The unweighted least-squares approximation is computed by Algorithm  \ref{Multi V+A Algo}. }
\label{Sampling 1D}
\end{figure}

On the other hand, as shown in the right plot of Figure \ref{Sampling 1D}, \texttt{VA+Weight} approximation gives a stable error reduction down to $10^{-13}$. 
The error reduction in \texttt{VA+Weight} also matches with the error reduction in the unweighted least-squares approximations. This is because the discrete orthogonal polynomials generated by V+A are unaffected by the ill-conditioning of $\A $.  The value of the $30$th discrete orthogonal polynomial overlaps with the value of the $30$th scaled Legendre polynomial in the domain (Figure \ref{discrete orth qr}).

\begin{figure}[!ht]
  \centering
        \includegraphics[width=9cm]{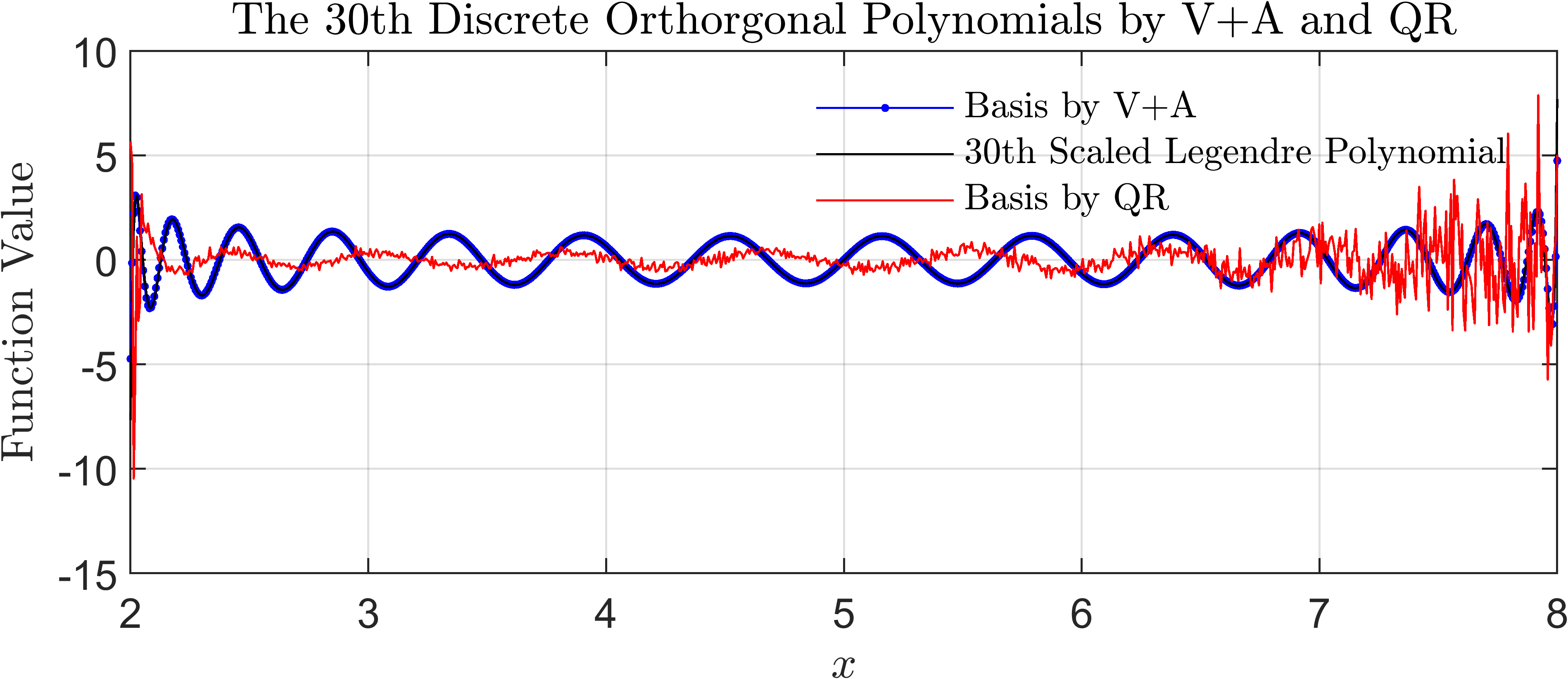}
    \caption{\small The discrete orthogonal polynomials and the scaled Legendre polynomials generated by \texttt{VA+ Weight} and  \texttt{QR+ Weight} using $M = 900$ equispaced points in $[-5, 10]$.}
    \label{discrete orth qr}
\end{figure}

The same patterns of convergence are found while approximating bivariate functions. As plotted in Figure \ref{Sampling 2D}, \texttt{VA+Weight} gives an approximation with higher accuracy than  \texttt{QR+Weight} in both domains. The difference in the two approximations is less in the 2D domain than in the 1D domain. This is because the multivariate Vandermonde matrix $\A $ is, in general, less ill-conditioned in 2D domains than in 1D domains. That said, the conditioning of the multivariate Vandermonde matrix $\A $ varies greatly with the shape of the domain. \texttt{VA+Weight} provides a stable and generalized method for multivariate approximations in irregular domains. 

\begin{figure}[!ht]
    \centering
    \includegraphics[width=14cm]{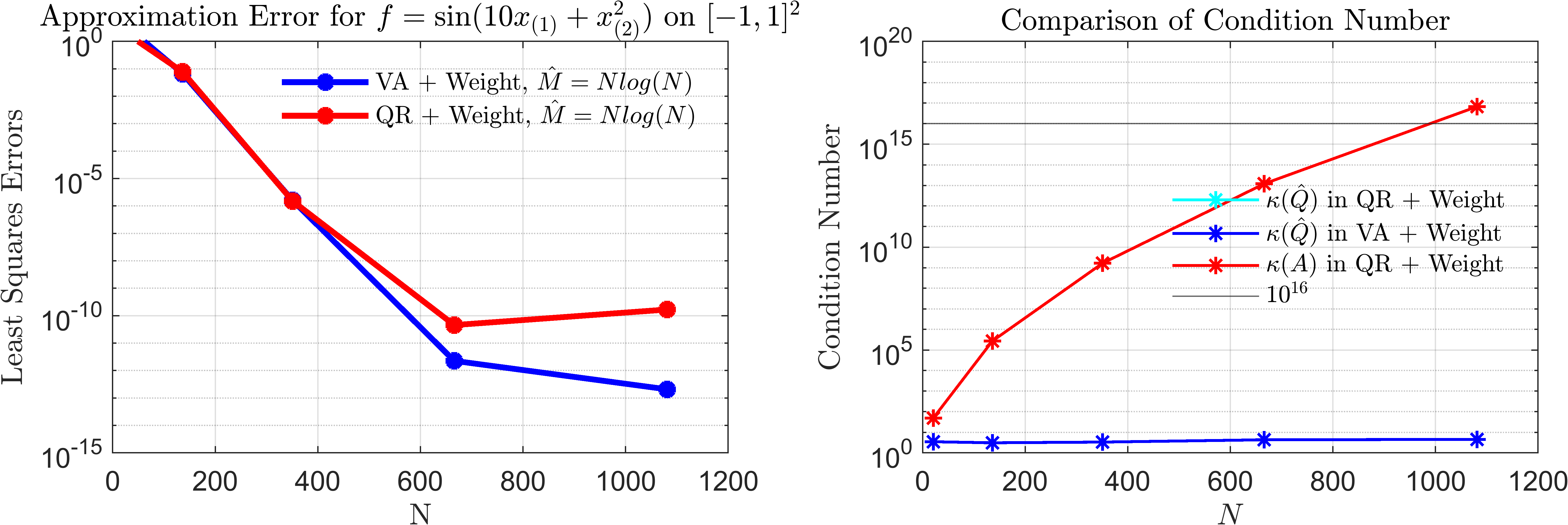}
    \includegraphics[width=14cm]{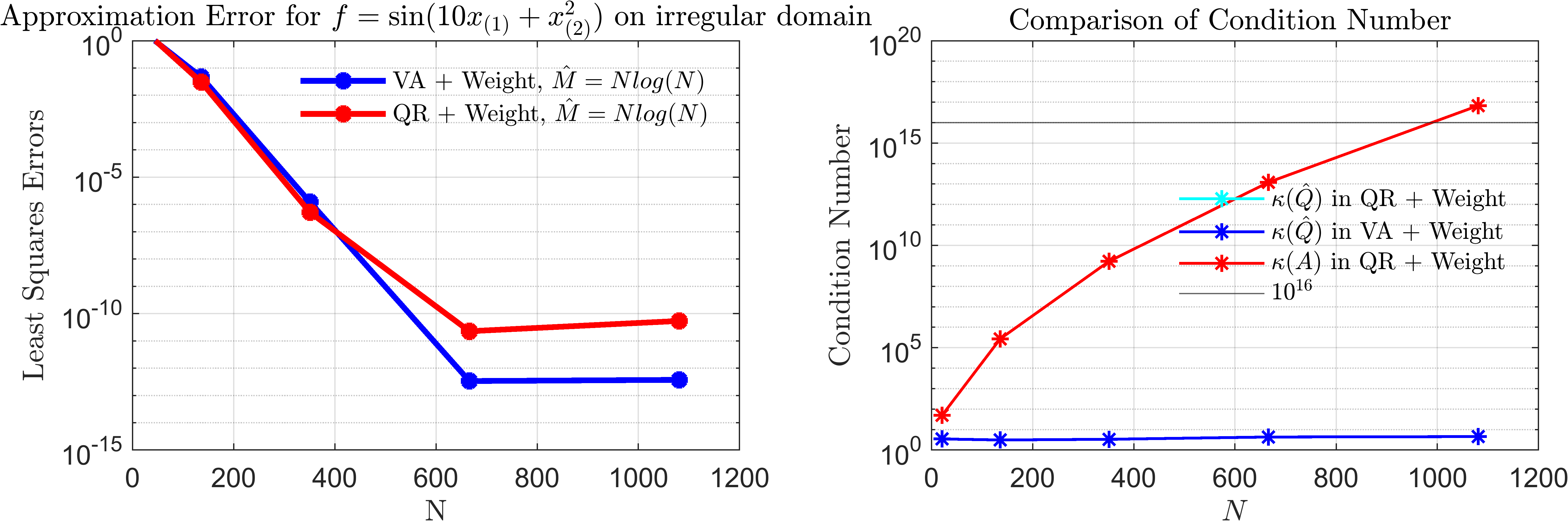}
    \caption{\small Comparison of \texttt{VA+Weight} and \texttt{QR+Weight} in 2D domains. The sample points are chosen as $M=N^2 \log N$ random points. We use Domain $2$ in Figure \ref{domain}.  }
    \label{Sampling 2D}
\end{figure}

Finding the best distribution of sample points in a high-dimensional irregular domain is an open question in the literature. We highlight that the weighting method in Algorithm \ref{Algo weighted least-square} is a practical tool for finding the near-optimal distribution of the sample points. We plot the $\hat{M}=N\log N$ sample points selected by \texttt{VA+Weight} for different domains in Figure \ref{domain}. Notice that the selected points are clustered near corners and boundaries.  This distribution pattern matches the pattern in Padua points in tensor-product domains. {This behaviour occurs because the measure we use to select sample points assigns a higher probability to points where the discrete orthogonal polynomials have larger absolute values. Typically, these points cluster near the boundaries. A simple example is the set of Chebyshev polynomials or Legendre polynomials in the interval $[-1, 1]$, where the polynomials exhibit larger absolute values at the endpoints.}

\begin{figure}[!ht]
   \centering
        \includegraphics[width=14cm]{domain.jpg}
        \caption{\small $M=N^2\log N$ random sample points are drawn from the domain with $N=200$, plotted in the yellow dots. The weighted sample points selected are plotted in blue dots. }
    \label{domain}
\end{figure}

Finally, we note that different weighting measures can be used to improve different aspects of the approximation algorithm. As illustrated in the left two plots of Figure \ref{best 2D3}, the \texttt{VA+Weight} algorithm gives a similar accuracy as the unweighted V+A least-squares approximation but with improved efficiency. That said, the approximants obtained from \texttt{VA+Weight} are only near-optimal, but not the best polynomial approximation.  We propose combining the multivariate V+A with Lawson's algorithm (\texttt{VA+Lawson}) to obtain the best multivariate polynomial approximation. \texttt{VA+Lawson} is based on an iterative re-weighted least-squares process and can be used to improve the approximation accuracy. Our numerical experiments showed that through the \texttt{VA+Lawson} algorithm, we improved the approximation error from $1.4\times 10^{-6}$ to $3.2\times 10^{-7}$ . Moreover, we found an equioscillating error curve in 2D, a characteristic of the best polynomial approximation~\cite[Thm. 24.1]{ATAP}.

\begin{figure}[!ht]
    \centering
    \includegraphics[width=14cm]{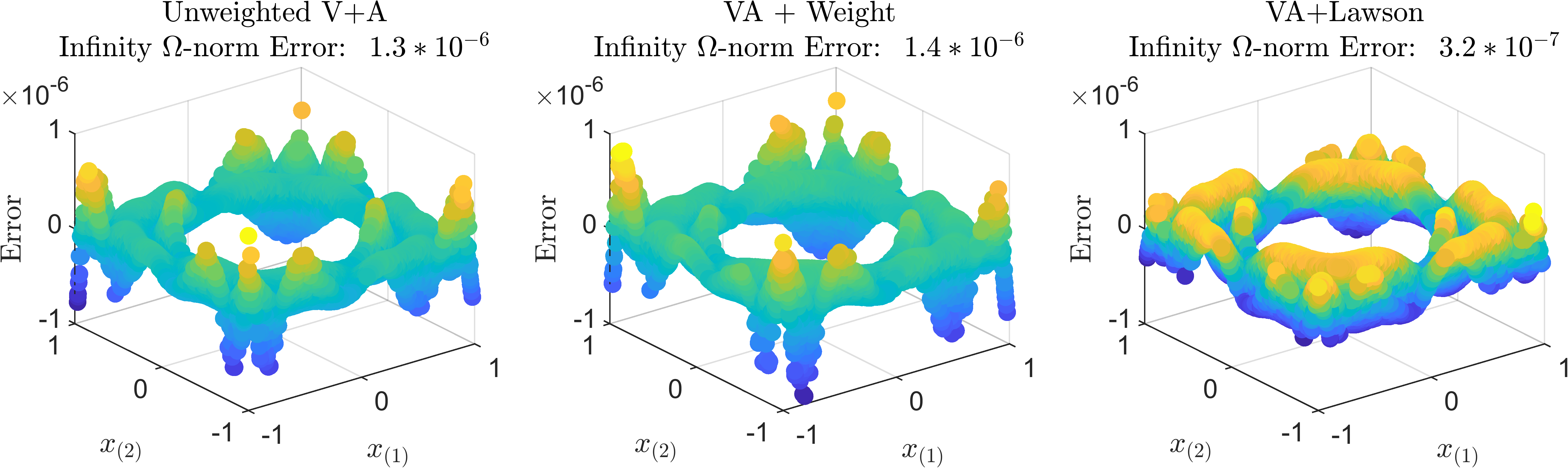} 
    \caption{\small Approximating $f= \sin(x_{(1)}x_{(2)})$ with total degrees of freedom $N=66$ with $M=N^2\log N$ random sample points. We use the same domain as the middle plot of Figure~\ref{domain}. The error curve of the approximant using V+A, \texttt{VA+Weight}, \texttt{VA+Lawson} are plotted in the left, middle, and right plots, respectively. For \texttt{VA+Lawson}, we carried out $10$ Lawson's iterations. }
    \label{best 2D3}
\end{figure}

\section{Conclusions and Future Works}

In this paper, we analyzed the multivariate Vandermonde with the Arnoldi method to approximate $d$-dimensional functions on irregular domains. The V+A technique resolves the ill-conditioning issue of the Vandermonde matrix and builds the discrete orthogonal polynomials with respect to the domain. Our main theoretical result is the convergence of the multivariate V+A least-squares approximation for a large class of domains. The sample complexity required for convergence is quadratic in the degree of freedom of the approximation space, up to a logarithmic factor {polynomial approximations in total degree and maximum degree polynomial spaces.} {For hyperbolic cross polynomial spaces, \cite{Adcock2019}  proved a sample complexity scaling of $M = \O(N^2)$ under the $L_2$ norm, the study of hyperbolic cross polynomial spaces under the $L_\infty$ norm remains a topic for our future research.}
Using a suitable weighting measure, we showed that the sample complexity can be further improved. The weighted V+A least-squares method requires only log-linear sample complexity $M = \O(N \log(N))$. 

{Our preliminary numerical results validate that the least squares approximation using the V+A method provides well-conditioned and near-optimal approximations for multivariate functions on (irregular) domains with the dimension $d$ ranging from $1$ to $5$. Additionally, the (weighted) least squares approximation using the V+A method performs competitively with state-of-the-art orthogonalization techniques and can serve as a practical tool for selecting near-optimal distributions of sample points in irregular domains.
It is important to note that Algorithm \ref{Multi V+A Algo} and the theories we prove in this paper for multivariate V+A are applicable for $d \ge 2$. However, due to the growth of the polynomial basis with the dimensionality of functional approximation {(i.e., $N = \O(n^d)$ for total degree and maximum degree polynomial spaces)}, the CPU time required for computations for even higher dimensions will soon become infeasible due to the curse of dimensionality. 
The development of practical and scalable algorithms for high-dimensional functional approximation is also deferred to future work.

V+A has many applications beyond least-squares fitting. Our numerical experiments showed that the \texttt{VA+Lawson} algorithm improves the approximation accuracy and generates an equioscillating error curve. Yet, the convergence profile for \texttt{VA+Lawson} still seems to be unknown and could be an objective of future work. Another extension under consideration involves using multivariate V+A  in vector- and matrix-valued rational approximations~\cite[Subsec. 2.4]{skiter}. Generally speaking, in any application that involves matrix operations of Vandermonde matrix (or its related form), it seems likely that the V+A procedure could be an effective idea to apply. 

\textbf{Acknowledgments}: We thank the referees for their valuable input, which has improved our work. We also appreciate the questions and comments from Ben Adcock, which led to our research on the convergence of V+A in {different polynomial spaces and smoothness spaces.}

\clearpage
\newpage
\appendix

\clearpage

\scriptsize{
\bibliographystyle{plain}
\bibliography{bib_file.bib}
}

\end{document}